\DeclareFontFamily{OT1}{wncyr}{\hyphenchar\font45}
\DeclareFontShape{OT1}{wncyr}{m}{n}{%
   <5> <6> <7> <8> <9> gen * wncyr
   <10> <10.95> <12> <14.4> <17.28> <20.74>  <24.88>wncyr10}{}
\DeclareFontShape{OT1}{wncyr}{m}{it}{%
   <5> <6> <7> <8> <9> gen * wncyi
   <10> <10.95> <12> <14.4> <17.28> <20.74> <24.88> wncyi10}{}
\DeclareFontShape{OT1}{wncyr}{m}{sc}{%
   <5> <6> <7> <8> <9> <10> <10.95> <12> <14.4>
   <17.28> <20.74> <24.88>wncysc10}{}
\DeclareFontShape{OT1}{wncyr}{b}{n}{%
   <5> <6> <7> <8> <9> gen * wncyb
   <10> <10.95> <12> <14.4> <17.28> <20.74> <24.88>wncyb10}{}
\newtheorem{thm}{Theorem}
\newtheorem{cor}{Corollary}
\newtheorem{prop}{Proposition}
\theoremstyle{remark}
\theoremstyle{definition}
\newtheorem{example}{\bf Example}
\title{\bfseries The cone of Hilbert Nullforms}
\author{\Large Vladimir~L.~Popov\\
\footnotesize
Steklov Mathematical Institute\\[-5pt]
\footnotesize
Russian Academy of Sciences \\[-5pt]
\footnotesize Gubkina 8, Moscow, 119991, Russia\\
{ \footnotesize \tt popovvl@mi.ras.ru} }
\date{}
\begin{document}

\maketitle

\begin{abstract}
We describe a geometric--combinatorial algorithm that allows one, using solely the system of weights and roots, to determine the Hesselink strata of the null-cone of a linear representation of a reductive algebraic group and calculate their dimensions. In particular, it provides a constructive approach to calculating the dimension of the null-cone and determining all its irreducible components of maximal dimension. In the case of the adjoint representation (and, more generally, a $\theta$-representation), the algorithm turns into the classification algorithm for the conjugacy classes of nilpotent elements in a semisimple Lie algebra (respectively, homogeneous nilpotent elements in a cyclically graded semisimple Lie algebra).
\end{abstract}

\section*{1. Introduction\label{vved}}

Below all algebraic varieties are taken over an algebraically closed field $k$ of characteristic zero.

Let $G$
 be a reductive algebraic group and let $V$
be a finite-dimensional algebraic
$G$-module.
A vector $v\in V$  is called a {\it Hilbert nullform} (cf.\;\cite{H2};  other terms are
{\it unstable} vector, \cite{M}, {\it
nilpotent} vector, \cite{PV}), if any $G$-invariant polynomial function $f$ on $V$ that vanishes at
$0$ also vanishes at $v$.  The closed subscheme ${\cal N}^{}_{G, V}$ of $V$
defined by vanishing of all such
$f$ is called the {\it null-cone of} $V$. Hilbert was the first who found that the null-cones play a fundamental role in the theory of invariants and its applications, see\,\cite{Hi}. In the general case, the null-cone has a complicated structure; it may be irreducible, reducible, or nonequidimensional, and may have complex singularities.

According to the Hilbert--Mumford criterion,
\cite{Hi},\,\cite{M}, a vector $v\in V$
is a nullform if and only if there is a one-parameter subgroup   $\lambda: k^\times\rightarrow G$ that steers
 $v$ to $0$ (i.e., such that
$\operatorname{lim}_{t\rightarrow
0}\lambda(t)\cdot v=0$). In 1978, when solving different problems, several authors simultaneously revealed the key role of those $\lambda$
 that do it ``most optimally'', see \cite{B}, \cite{Ke}, \cite{H1},
\cite{H2}, \cite{R} (however, for
$V=\operatorname{Lie}(G)$, where the null-cone coincides with the cone of nilpotent elements, this fact was revealed in a different form
already
in
\cite{D}: indeed, it was found out later, see
\cite{H2}, \cite{H1}, \cite{Sl},  that the ``characteristics'' of nilpotent elements introduced in
\cite{D} can be equivalently defined in terms of the optimality property). The formalization of this property is performed by introducing an
$\operatorname{Ad}(G)$-invariant norm on the set of all one-parameter subgroups: optimal subgroups are those that have the minimal norm. Technically, it is more convenient to consider a wider class of virtual one-parameter subgroups, \cite{H1},
\cite{H2},  which is actually the set
$\operatorname{Lie}(G)(\mathbb Q)$
 of rational semisimple elements in the Lie algebra  $\operatorname{Lie}(G)$.  The norm is defined by the choice of an
$\operatorname{Ad}(G)$-invariant inner product $\langle \ {,}\
\rangle$ on $\operatorname{Lie}(G)$
 that is rational-valued and
 positive definite on the $\mathbb
Q$-vector space
$\operatorname{Lie}(T)(\mathbb Q)$  for a certain (and thereby for any) maximal torus $T$ in $G$. If
$\Lambda(v)$
is a family of all optimal virtual one-parameter subgroups corresponding to a nullform ~$v$, then, following \cite{H2},  we can consider the following equivalence relations on the set of all nullforms:
$$
x\sim y \ \Longleftrightarrow \
\Lambda(g\cdot x) =\Lambda(y)\ \text{ for a certain } \ g\in G.
$$
The equivalence classes form a finite partition of the null-cone ${\cal N}^{}_{G, V}$
and are called {\it strata}. Note that, in general, the strata depend on the choice of the norm. It is remarkable that, although the null-cone
${\cal N}^{}_{G, V}$ itself has a complicated structure, the configuration of the strata turns out to be simple: Hesselink \cite{H2} proved that each stratum $S$ is an irreducible subvariety of the null-cone ${\cal
N}^{}_{G, V}$,  open in its closure
$\overline{S}$, and there exists an isomorphism
$\varphi$ of the stratum $S$ onto an invariant open subset of a homogeneous vector bundle $E$ over $G/P$, where $P$ is a parabolic subgroup in $G$ (all these objects depend on $S$); in particular, $S$
is a smooth rational variety. Moreover, there exists a morphism $\pi:
E\rightarrow \overline{S}$,  which is a resolution of singularities of the variety
$\overline{S}$ and is such that
$\pi|^{}_{\pi^{-1}(S)}=\varphi^{-1}$. In particular, this gives a resolution of singularities for each irreducible component of the null-cone
${\cal N}^{}_{G, V}$ and shows that, using Kempf's terminology, \cite{Ke}, the latter is obtained by collapsing of a homogeneous vector bundle over a generalized flag variety of the group $G$.  Note that, in general, the closure of a stratum is not a union of strata.

Developing the idea outlined in
\cite[(2.2.22)]{P3}
we consider in this paper the problem of
constructive
description of strata.
This problem has been already discussed in т\;\cite[Sec\-tion 5]{H2}, but has not been solved there: concerning the directly related
problem
of finding out if a given saturated set is the closure of a blade (see \cite{H2} for the meaning of these terms; we will not use them any longer), Hesselink 
 writes that it ``is more difficult'' and
  restricts the analysis to a certain necessary condition. In the present paper 
  we show that the family of all strata of the null-cone
${\cal N}^{}_{G, V}$  is completely determined only by the geometric configuration of two finite subsets of the Euclidean space
$\operatorname{Lie}(T)(\mathbb Q)$, namely, the system of weights of the $G$-module $V$ (endowed with their multiplicities) and the system of roots of the group $G$,
and we obtain a simple geometric-combinatorial algorithm for describing this family; this algorithm employs only elementary geometric operations, namely,  the orthogonal projection of a finite system of points onto a linear space and taking its convex hull. Technically, the procedure is reduced to constructing certain oriented rooted trees by means of these operations and assigning plus or minus signs to the vertices of these trees according to a certain rule. The strata are defined precisely by the trees to whose roots
is assigned a plus sign.

More precisely, this algorithm allows one to describe constructively
the following objects:

\begin{list}{}{\setlength{\topsep}{0.8mm}
\setlength{\parsep}{-1mm}
\setlength{\itemindent}{4mm}} \item[(a)]
 a certain finite subset $\mathfrak
S$ in $\operatorname{Lie}(T)(\mathbb Q)$;
\item[(b)] a certain linear subspace $V[l^+]$ in $V$ for every element $l\in \mathfrak S$; and \item[(c)]
a certain open subset $V[l^+]^0$ in
$V[l^+]$,
\end{list}

\noindent such that the sets
 $G\cdot V[l^+]^0$,  where $l$ runs over $\mathfrak S$,
exhaust, without repetition, the family of all strata of the null-cone ${\cal N}^{}_{G, V}$.

In this case, the dimension of the stratum $G\cdot V[l^+]^0$
 turns out to be equal to the sum of the number of roots lying the half-space
 $\{v \in \operatorname{Lie}(T)(\mathbb
 Q)\mid \langle l,v \rangle <0 \}$  and the number of weights (taken with their multiplicities) lying the half-space
 $\{v\in \operatorname{Lie}(T)(\mathbb
 Q)\mid \langle l,v\rangle \geqslant 1\}$.
  This yields a simple constructive approach to calculating the dimension of the null-cone ${\cal N}^{}_{G, V}$:
 it is equal to the maximum of the above sums taken over all $l$ from $\mathfrak S$.
 Calculating $\dim {\cal N}^{}_{G, V}$ is
 a nontrivial problem;
 for example, the problem of classifying equidimensional representations and that of classifying representations with a free module of covariants, \cite{P1}, \cite{Ad}, \cite{S1},
 \cite{L1},  are reduced to the above problem; their solutions
 are
 actually based on
 developing various ad hoc methods for estimating $\dim {\cal N}^{}_{G, V}$. Until now, general information on $\dim {\cal N}^{}_{G, V}$
 has been restricted to various a priori bounds of  this number
 based on the Hilbert--Mumford criterion, see\,\cite{P1},
\cite{S1}, \cite{S2}, \cite{Ad}, \cite{K2}.

Moreover, this also yields a simple and constructive method for describing all irreducible components of maximal dimension of the null-cone
${\cal N}^{}_{G, V}$: they are exhausted, without repetition, by varieties $G\cdot V[l^+]$, where $l$ is an element for which the above maximum is attained.

The algorithm can be applied for composing a computer program for the constructive description of all strata, calculating their dimensions, calculating the dimension of the null-cone, and describing all its irreducible components of maximal dimension.
The operating rate of such a program is determined by its part related to sorting out all the subsets of the set of weights. 
 For $G=T$, the system of weights may be any finite set of characters, so such looking over is inevitable. But if $G$ is noncommutative, the Weyl group provides nontrivial symmetries of the system of weights, which can be used for improving the enumerative part of the program.\footnote{{\it Added in September} 2010:  Jointly with N.\,A'Campo a computer implementation of this algorithm
 has been composed in 2005.~Pro tempore the code can be obtained on request sent to the address {\tt norbertacampo@gmail.com}. But we hope to compose also the relevant comments for publishing  and posting both on the web.
}

In the case of the adjoint representation, and, more generally, of a so-called $\theta$-represen\-ta\-tion (see
\cite{V1}, \cite{K2}, as well as \cite[8.5]{PV}
 and \cite{L1}) our algorithm turns into the algorithm of classifying conjugacy classes of nilpotent elements in a semisimple Lie algebra and, respectively, of homogeneous nilpotent elements in a cyclically graded semisimple Lie algebra (see below Proposition 5).
Although these classifications, obtained in
\cite{D} and
\cite{V1}, \cite{V2} by a different method, are known, it would be interesting to derive (and check) them again in nonserial cases by a computer program based on our algorithm. This, in particular, applies to the classification of nullforms in the spaces of 3-vectors of a 9-dimensional space,
\cite{VE}, 4-vectors of an 8-dimensional space, \cite{An}, spinors of 14- and 16-dimensional spaces, \cite{P2},
\cite{AE}, and to certain other classifications. In fact, each of the cited results can be considered as the solution,  in the case under consideration, of the classification problem for the strata of the null-cone. Let us mention another important case when the classification problem has also been solved,
\cite{LB}: it is the case of the diagonal action  of the group
$\operatorname{GL}_n$  by conjugation on the space of  $m$-tuples of $n\times n$-matrices.

The final section contains examples of application of the algorithm. We discuss the cases of rank
$\leqslant 2$, ternary forms, and nilpotent elements in semisimple Lie algebras (the case of a special simple Lie algebra of type
$\operatorname{G}_2$ is considered in detail) and in spaces of
$\theta$-representations. In this section we also analyze an example of a certain action of
$\operatorname{GL}_2$ which shows that, in general, strata depend on the choice of the norm and may not coincide with orbits if there are only finitely many orbits in the null-cone.

\section*{2. Strata}

Since the nullforms for the actions of the group $G$ and its identity component in $V$ are the same, we will assume that
$G$  is a {\it connected} group. The space $V$
is assumed to be {\it nonzero}.

\smallskip

{\bf 2.1.} \hskip 1.5mm First, we shall introduce certain notation and conventions.

Let $S$  be an algebraic torus. The group
${\rm Hom}(S,k^{\times})$ of its rational characters is a free abelian group of finite rank. We consider it in additive notation and denote by
$s^\mu$ the value of the character $\mu\in {\rm
Hom}(S,k^{\times})$ at point~$s\in S$.

Let $\mathfrak s:={\rm Lie}(S)$.
 The map ${\rm
Hom}(S,k^{\times})\rightarrow \mathfrak s^*,
\ \mu\mapsto d_e\mu $, is an embedding. We identify ${\rm Hom}(S,k^{\times})$ with the image of this embedding. Then, ${\rm
Hom}(S,k^{\times})$
 is a lattice of full rank in $\mathfrak s^*$,
so that its linear span over $\mathbb Q$
is a  $\mathbb Q$-form of the vector space $\mathfrak s^*$. Let
$\mathfrak s(\mathbb Q)$ be the
$\mathbb Q$-form of the Lie algebra $\mathfrak s$ defined by this span:
\begin{equation}
\mathfrak s(\mathbb Q):=\{s\in \mathfrak s\mid
l(s)\in \mathbb Q\ \forall \   l\in {\rm
Hom}(S,k^{\times})\}. \label{rational}
\end{equation}

For any algebraic $S$-module $L$ and character $\mu\in {\rm Hom}(S,k^{\times})$
we set
$$
L_\mu:=\{l\in L\mid s\cdot l=s^\mu l \
\forall\ s\in S\}
$$
and denote by $\Delta^{}_{L, S}$ the system of weights of $S$ in $L$,
$$
\Delta^{}_{L, S}:=\{\mu\in {\rm
Hom}(S,k^{\times})\mid L_\mu\neq 0\}.
$$
Then, $ L=\bigoplus_{\mu\in \Delta^{}_{L,
S}}L_\mu.$  The multiplicity of the weight $\mu$ in the
$S$-module $L$
is the number $\dim L_\mu$.

\smallskip

{\bf 2.2.} \hskip 1.5mm Let $T$ be a maximal torus in $G$ and $\mathfrak t:={\rm
Lie}(T)$. Fix an ${\rm
Ad}(G)$-invariant inner product
$\langle\ {,}\ \rangle$ on $\mathfrak
g:={\rm Lie}(G)$ whose restriction on
$\mathfrak t(\mathbb Q)$
is rational-valued and
positive definite. The Weyl group $W^{}_{G, T}:={\rm
Nor}_{G}(T)/T$, considered as a group of transformations of the space $\mathfrak t$, carries
$\mathfrak t(\mathbb Q)$ into itself, and the restriction $\langle\ {,}\
\rangle|^{}_{\mathfrak t(\mathbb Q)}$
is $W^{}_{G, T}$-invariant. This restriction uniquely defines the inner product $\langle\ {,}\ \rangle$.  Any rational-valued positive definite $W^{}_{G,
T}$-invariant inner product on
$\mathfrak t(\mathbb Q)$ is obtained as such a restriction. In what follows, we assume that the strata of the null-cone  ${\cal
N}^{}_{G, V}$ are defined by the norm on
$\operatorname{Lie}(G)(\mathbb Q)$, which is defined by the inner product $\langle\ {,}\
\rangle$.

We identify $\mathfrak t$ with $\mathfrak
t^*$ by means of the map $\mathfrak t\rightarrow
\mathfrak t^*, \ t\mapsto \langle
t,\cdot\rangle$. Then ${\rm
Hom}(T,k^{\times})$ becomes a lattice of full rank in $\mathfrak t(\mathbb Q)$.  Henceforth, we consider $\mathfrak t(\mathbb Q)$
as a Euclidean linear space over $\mathbb Q$ with respect to the inner product $\langle\ {,}\
\rangle|^{}_{\mathfrak t(\mathbb Q)}$.

\smallskip

{\bf 2.3.} We have the following decomposition:
\begin{equation}
 V=\bigoplus_{\mu\in \Delta^{}_{V, T}} V_\mu.
\label{v}
\end{equation}

Since maximal tori are conjugate and their union is dense in $G$, the action of $G$ on
$V$ is trivial if and only if
$\Delta^{}_{V, T}=\{0\}$.

If $V$ is the adjoint $G$-module
$\mathfrak g$, then $\Delta^{}_{\mathfrak g,
T}=\Phi_{\mathfrak g, T}\sqcup \{0\}$, where
$\Phi_{\mathfrak g, T}$
is the system of roots of the Lie algebra $\mathfrak g$
with respect to the torus $T$. We have $\dim
\mathfrak g_\alpha=1\ \forall\ \alpha\in
\Phi_{\mathfrak g, T}$, $\mathfrak
g_0=\mathfrak t$ and
\begin{equation}  
\mathfrak g=\mathfrak t\oplus
\bigoplus_{\alpha\in \Phi_{\mathfrak g,
T}}\mathfrak g_{\alpha}. \label{root}
\end{equation}

{\bf 2.4.} \hskip 1.5mm Let $l\in\mathfrak
t(\mathbb Q)$
 be a nonzero element. For any number $\gamma
\in \mathbb Q$ we set
$$\{l=\gamma\}:=\{t\in \mathfrak t(\mathbb
Q)\mid \langle l, t\rangle=\gamma\}.$$
$\{l\geqslant
\gamma\}$, etc. are defined similarly.

The element $l$ determines in $\mathfrak g$
the following algebraic subalgebras:
 \begin
 {equation}
  \mathfrak t[l]:=\{
  t\in \mathfrak
 t \mid (l, t)=0\},
 \quad
 \mathfrak g[l]:=\mathfrak t[l]\oplus
 \bigoplus_{
 \alpha\in\{l=0\}}
 \mathfrak g_{\alpha},
 \quad
 \mathfrak p[l]:= \mathfrak t\oplus
 \bigoplus_{
 \alpha\in \{l\geqslant0\}}
 \mathfrak g_{\alpha}.
 \label{subalg}
\end{equation}

Let $T[l]$, $G[l]$ and $P[l]$
be connected algebraic subgroups of the group
$G$ with the Lie algebras $\mathfrak t[l]$,
$\mathfrak g[l]$ and $\mathfrak p[l]$, respectively.
$G[l]$  is a reductive group, $T[l]$ is its maximal torus, and
$P[l]$ is a parabolic subgroup in $G$. It follows from
\eqref{rational} and \eqref{subalg} that
\begin{equation}
\mathfrak t[l](\mathbb Q)=\{l=0\}, \quad
\Phi_{\mathfrak g[l], T[l]}=\Phi_{\mathfrak
g, T}\cap {\mathfrak t}[l]({\mathbb Q}),
\quad {\mathfrak g}[l]_{\alpha}={\mathfrak
g}_{\alpha} \ \forall \ \alpha\in
\Phi_{{\mathfrak g}[l], T[l]}.
\label{subtorus}
\end{equation}

The element $l$ also determines the following linear subspaces in $V$:
\begin{equation}
V[l]:= \hskip -3mm\bigoplus_{ \mu\in\{l=1\}}
V_{\mu},
\quad
V[l^{+}]:= \bigoplus_{
\mu\in\{l\geqslant 1\}} V_{\mu },
\label{vect}
\end{equation}

Relations \eqref{subalg} and \eqref{vect} imply the
$G[l]$-invariance of $V[l]$ and the
$P[l]$-invariance of $V[l^+]$.

\smallskip

{\bf 2.5.}\hskip 1.5mm Let
\begin{equation*}
V[l^+]\rightarrow V[l], \ v\mapsto v',
\label{proj}
\end{equation*}
be the projection parallel to
$\bigoplus_{\mu\in\{l>1\}} V_{\mu}$.
Consider in $V[l]$ the null-cone ${\cal
N}_{G[l], V[l]}$    and in $V[l^+]$ the open
$P[l]$-invariant subset
\begin{equation}
V[l^+]^0:= \bigl\{v\in V[l^+]\mid v'\notin
{\cal N}^{}_{G[l],V[l]}\bigr\}
\label{fiber}
\end{equation}

In general, the set  $V[l^+]^0$  may be empty. By \eqref{fiber}, its noneptiness is equivalent to the inequality
\begin{equation}{\cal N}^{}_{G[l], V[l]}\neq
V[l]. \label{strat}\end{equation}

\smallskip

\noindent {\bf Definition.}   A nonzero element $l\in \mathfrak t(\mathbb Q)$
is called a {\it stratifying}
element if condition \eqref{strat} holds.

\medskip

{\bf 2.6.}\hskip 1.5mm Suppose that $l$
is a stratifying element from
$\mathfrak t(\mathbb Q)$. Then, the variety $V[l^+]^0$  is nonempty. Let
$G\times^{P[l]}\!V[l^+]^0$  be the space of the homogeneous bundle over $G/P[l]$ with the fiber
$V[l^+]^0$. Consider the morphism
\begin{equation}
G\times^{P[l]}\!V[l^+]^0\longrightarrow V, \ \
[g, v]\mapsto g\cdot v, \label{iso}
\end{equation}
where $[g, v]$ is the image of the point $g\times v$ under the quotient morphism  $G\times V[l^+]^0\rightarrow
G\times^{P[l]}\!V[l^+]^0$.  Let ${\cal H}[l]$
be the image of morphism
\eqref{iso},
\begin{equation} {\cal H}[l]:=G\cdot
V[l^+]^0. \label{h}
\end{equation}

\begin{thm}\label{stratum}\

 {\rm (1)} If
$l\in \mathfrak t(\mathbb Q)$ is a stratifying element, then
\begin{list}{}{\setlength{\topsep}{1.6mm}\setlength{\parsep}{-1mm}
\setlength{\itemindent}{2.5mm}} \item[\rm
(a)] ${\cal H}[l]$ is a stratum of the null-cone
 ${\cal N}_{G,
V}$ that is different from $\{0\}$; \item[\rm (b)] morphism {\rm
\eqref{iso}} isomorphically maps
$G\times^{P[l]}\!V[l^+]^0$ onto ${\cal H}[l]$;
\item[\rm (c)] $G\cdot V[l^+]$ is the closure of ${\cal H}[l]$.
\end{list}

{\rm (2)} Any stratum of the null-cone ${\cal N}_{G, V}$ that is different from $\{0\}$ has the form
${\cal H}[l]$ for a certain stratifying element  $l$.

{\rm (3)} ${\cal H}[l_1]={\cal H}[l_2]$
if and only if $l_1$ and $l_2$
lie on the same $W^{}_{G, T}$-orbit.
\end{thm}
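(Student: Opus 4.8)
The plan is to translate the combinatorial object $l$ into the language of Kempf's and Hesselink's theory of optimal one-parameter subgroups and then read off all three assertions from that dictionary. The claim I would establish first is the following interpretation of $V[l^+]^0$: for a stratifying $l$, a vector $v$ lies in $V[l^+]^0$ if and only if $v$ is a nullform for which the virtual one-parameter subgroup $l$, normalized so that the minimal value $\langle l,\mu\rangle$ over the weights $\mu$ occurring in $v$ equals $1$, is optimal and in standard position with respect to $T$. Indeed, the inclusion $v\in V[l^+]$ says exactly that $l$ steers $v$ to $0$ with all weights $\geqslant 1$, so by the Hilbert--Mumford criterion $v$ is a nullform; the projection $v'\in V[l]$ is the leading component $\lim_{t\to 0}t^{-1}\lambda_l(t)\cdot v$ in the weight-$1$ space. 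The crucial point is Kempf's optimality criterion: $l$ is optimal for $v$ precisely when this leading term $v'$ is semistable for the Levi subgroup $Z_G(l)$. Since $l$ acts on $V[l]$ by a single scalar, semistability for $Z_G(l)$ coincides with $v'\notin {\cal N}^{}_{G[l],V[l]}$, which is the defining condition \eqref{fiber} of $V[l^+]^0$.

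Granting this dictionary, parts (1a) and (2) follow quickly. For (2): given a stratum different from $\{0\}$ and a nullform $v$ in it, choose an optimal virtual one-parameter subgroup in $\Lambda(v)$, conjugate it by some $g\in G$ into $\mathfrak t(\mathbb Q)$, and rescale it to $l$ with minimal weight $1$; then $g\cdot v\in V[l^+]^0$, so the stratum meets, hence equals, ${\cal H}[l]$, and $l$ is stratifying because $V[l^+]^0\neq\emptyset$. For (1a): by the dictionary every $v\in V[l^+]^0$ has the same optimal class up to $G$-conjugacy, so ${\cal H}[l]=G\cdot V[l^+]^0$ lies in a single stratum; conversely every element of that stratum is $G$-conjugate into $V[l^+]^0$, so ${\cal H}[l]$ is exactly one stratum. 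It is different from $\{0\}$ because $0'=0\in {\cal N}^{}_{G[l],V[l]}$ forces $0\notin V[l^+]^0$, while $V[l^+]^0$ is nonempty.

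For (1b) I would use the $P[l]$-invariance of $V[l^+]^0$ recorded in 2.5 together with Kempf's uniqueness statement: the optimal one-parameter subgroups of a fixed nullform form a single orbit under the associated parabolic, which here is $P[l]$. Surjectivity of \eqref{iso} is the definition of ${\cal H}[l]$. For injectivity, if $g\cdot v_1=v_2$ with $v_1,v_2\in V[l^+]^0$, then $g$ carries the optimal class of $v_1$ onto that of $v_2$; as both classes contain $l$, Kempf's uniqueness gives $g\cdot l=p\cdot l$ for some $p\in P[l]$, so $p^{-1}g\in Z_G(l)\subseteq P[l]$ and hence $g\in P[l]$, which is precisely the relation $[g_1,v_1]=[g_2,v_2]$ in $G\times^{P[l]}\!V[l^+]^0$. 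As the source is a smooth homogeneous bundle of the same dimension as ${\cal H}[l]$, the bijective $G$-equivariant morphism \eqref{iso} realizes the isomorphism $\varphi^{-1}$ of Hesselink quoted in the Introduction, hence is an isomorphism. Part (1c) then follows from Kempf's collapsing: the morphism $G\times^{P[l]}\!V[l^+]\to V$ has closed image $G\cdot V[l^+]$ because the base $G/P[l]$ is complete, while $V[l^+]^0$ is a nonempty open, hence dense, subset of the vector space $V[l^+]$; therefore $G\cdot V[l^+]=\overline{G\cdot V[l^+]^0}=\overline{{\cal H}[l]}$.

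Finally, (3) is a uniqueness statement modulo $W^{}_{G, T}$. If $l_2=w\cdot l_1$ for $w\in W^{}_{G, T}$, a representative in ${\rm Nor}_G(T)$ conjugates $V[l_1^+]^0$ onto $V[l_2^+]^0$ and fixes the $G$-saturation, so ${\cal H}[l_1]={\cal H}[l_2]$. Conversely, if the strata coincide, pick $v\in V[l_1^+]^0$ and $g\in G$ with $g\cdot v\in V[l_2^+]^0$; then $l_1$ and $g^{-1}\cdot l_2$ both lie in $\Lambda(v)$, so they are $P[l_1]$-conjugate, whence $l_1$ and $l_2$ are $\operatorname{Ad}(G)$-conjugate elements of $\mathfrak t(\mathbb Q)$ and therefore lie on one $W^{}_{G, T}$-orbit. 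The main obstacle in this scheme is the dictionary of the first paragraph: correctly pinning down the normalization and proving that Kempf's intrinsic optimality criterion for the leading term is equivalent to the membership condition $v'\notin {\cal N}^{}_{G[l],V[l]}$, since it is this equivalence that converts all the intrinsic uniqueness statements of Kempf and Hesselink into the purely combinatorial bookkeeping on the half-spaces $\{l=1\}$ and $\{l\geqslant 1\}$.
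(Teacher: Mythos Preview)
The paper does not supply its own argument here: its proof consists solely of the reference ``See \cite[Theorem 5.6]{PV} (see also \cite[Prop.\,1]{Sl}, \cite[Proof of Theorem 9.2]{N}, \cite[Remark 12.21]{Ki})''. Your sketch is precisely the Kempf--Hesselink optimality argument that those sources contain---the dictionary identifying $V[l^+]^0$ with the vectors for which $l$ is optimal via semistability of the leading term under the Levi, Kempf's uniqueness of the destabilizing parabolic for injectivity and for part (3), and the properness of the collapsing map for (1c)---so your approach is correct and coincides with what the paper invokes. The one place where you do not close the loop independently is the step from bijectivity to \emph{isomorphism} in (1b): bijectivity with smooth source does not by itself force an isomorphism (think of the normalization of a cusp), and you fall back on Hesselink's $\varphi$; but since the paper itself defers the entire theorem to the cited literature, this is no worse than what the paper does, and the missing ingredient (that $v\mapsto P(v)$ is a morphism, giving an explicit inverse) is exactly what the references supply.
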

\begin{proof} See \cite[Theorem
5.6]{PV} (see also \cite[Prop.\,1]{Sl},
\cite[Proof of Theorem 9.2]{N}, \cite[Remark
12.21]{Ki}).
\end{proof}

Since $\dim G\times^{P[l]}\!V[l^+]^0= \dim
G/P[l]+\dim V[l^+]^0,$ formulas \eqref{v},
\eqref{root}, \eqref{subalg}, \eqref{vect},
\eqref{fiber}, claim (1)(b) of Theorem
\ref{stratum} and the one-dimensionality of the root subspaces $\mathfrak g_\alpha$ imply the following corollary.
\begin{cor}[\rm formula for the dimension of a stratum] \label{dimen}
\begin{equation} 
\dim {\cal H}[l]= \#
\{l<0\}\cap\Phi_{\mathfrak g, T}\ +\hskip
-1mm \sum_{ \mu\in \{l\geqslant 1 \} }\hskip
-1mm \dim V_{\mu}. \label{dim}
\end{equation}
\end{cor}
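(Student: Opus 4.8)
The plan is to evaluate $\dim\mathcal{H}[l]$ through the isomorphism furnished by Theorem~\ref{stratum}\,(1)(b) and then to compute the two resulting summands separately. First I would use (1)(b): since morphism \eqref{iso} maps $G\times^{P[l]}\!V[l^+]^0$ isomorphically onto $\mathcal{H}[l]$, one has $\dim\mathcal{H}[l]=\dim G\times^{P[l]}\!V[l^+]^0=\dim G/P[l]+\dim V[l^+]^0$, the last equality being the one recorded just before the statement (the homogeneous bundle is locally trivial over $G/P[l]$ with fibre $V[l^+]^0$).

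Next I would determine $\dim V[l^+]^0$. By \eqref{vect} the space $V[l^+]=\bigoplus_{\mu\in\{l\geqslant 1\}}V_\mu$ is linear, hence irreducible, and by \eqref{fiber} the subset $V[l^+]^0$ is open in it and nonempty, the nonemptiness being exactly the stratifying condition \eqref{strat}. A nonempty open subset of an irreducible variety has full dimension, so $\dim V[l^+]^0=\dim V[l^+]=\sum_{\mu\in\{l\geqslant 1\}}\dim V_\mu$ by \eqref{v}, which is the second summand in \eqref{dim}.

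It remains to compute $\dim G/P[l]=\dim\mathfrak{g}-\dim\mathfrak{p}[l]$. From \eqref{root} and the one-dimensionality of each root space, $\dim\mathfrak{g}=\dim\mathfrak{t}+\#\Phi_{\mathfrak{g},T}$; from the description of $\mathfrak{p}[l]$ in \eqref{subalg} and the same one-dimensionality, $\dim\mathfrak{p}[l]=\dim\mathfrak{t}+\#(\{l\geqslant 0\}\cap\Phi_{\mathfrak{g},T})$. The copies of $\mathfrak{t}$ cancel, and since every root $\alpha$ satisfies either $\langle l,\alpha\rangle\geqslant 0$ or $\langle l,\alpha\rangle<0$, the complement of $\{l\geqslant 0\}\cap\Phi_{\mathfrak{g},T}$ in $\Phi_{\mathfrak{g},T}$ is $\{l<0\}\cap\Phi_{\mathfrak{g},T}$; hence $\dim G/P[l]=\#(\{l<0\}\cap\Phi_{\mathfrak{g},T})$, the first summand in \eqref{dim}. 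Adding the two contributions gives \eqref{dim}.

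The whole argument is a routine dimension count once (1)(b) is available; the single point that must not be overlooked is the equality $\dim V[l^+]^0=\dim V[l^+]$, which rests on $V[l^+]^0$ being nonempty. This is precisely where the stratifying hypothesis \eqref{strat} enters, and without it the right-hand side of \eqref{dim} need not compute $\dim\mathcal{H}[l]$ at all.
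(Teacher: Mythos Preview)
Your argument is correct and follows exactly the route the paper indicates: use Theorem~\ref{stratum}\,(1)(b) together with $\dim G\times^{P[l]}\!V[l^+]^0=\dim G/P[l]+\dim V[l^+]^0$, then read off each summand from \eqref{root}, \eqref{subalg}, \eqref{vect}, \eqref{fiber} and the one-dimensionality of the root spaces. You have simply spelled out the dimension count that the paper compresses into a single sentence before the corollary.
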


In turn, since the strata are open in their closures and the number of strata is finite, we obtain the following corollary from  \eqref{v} and \eqref{dim}.
\begin{cor}[\rm numerical criterion for the openness of a stratum in $V$] \label{equal} The following properties of a stratifying element $l\in \mathfrak
t(\mathbb Q)$ are equivalent:

\begin{list}{}{\setlength{\topsep}{1.6mm}\setlength{\parsep}{-1mm}
\setlength{\itemindent}{2.5mm}} \item[\rm
(a)] the stratum ${\cal H}[l]$ is open in $V$,
\item[\rm (b)] $\dim {\cal H}[l]=\dim V$,
\item[\rm (c)]  $\#\{l<0\}
\cap\Phi_{\mathfrak g, T} = \sum_{\mu\in
\{l<1\} }\dim V_{\mu}.$
\end{list}

A stratifying element $l$ with these properties exists if and only if
${\cal N}_{G, V}=V$.
\end{cor}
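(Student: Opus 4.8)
The plan is to derive all three equivalences as direct consequences of Corollary~\ref{dimen} and Theorem~\ref{stratum}, using only the fact that the affine space $V$ is an irreducible variety. The equivalence (b)$\Leftrightarrow$(c) is pure bookkeeping: by \eqref{v} the set of weights $\Delta^{}_{V,T}$ is the disjoint union of its parts lying in $\{l<1\}$ and in $\{l\geqslant 1\}$, so $\dim V=\sum_{\mu\in\{l<1\}}\dim V_\mu+\sum_{\mu\in\{l\geqslant 1\}}\dim V_\mu$. Substituting the dimension formula \eqref{dim} into the equation $\dim{\cal H}[l]=\dim V$ and cancelling the common term $\sum_{\mu\in\{l\geqslant 1\}}\dim V_\mu$ leaves exactly the identity in~(c).

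For (a)$\Leftrightarrow$(b) I would argue as follows. If ${\cal H}[l]$ is open in $V$, then, being a nonempty open subset of the irreducible variety $V$, it is dense, whence $\dim{\cal H}[l]=\dim V$; this gives (a)$\Rightarrow$(b). Conversely, by Theorem~\ref{stratum} the stratum ${\cal H}[l]$ is irreducible and open in its closure $\overline{{\cal H}[l]}$. If $\dim{\cal H}[l]=\dim V$, then $\overline{{\cal H}[l]}$ is an irreducible closed subvariety of the irreducible variety $V$ having the same dimension, so $\overline{{\cal H}[l]}=V$; as ${\cal H}[l]$ is open in its closure, it is then open in $V$. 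This gives (b)$\Rightarrow$(a).

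The final assertion I would handle in two directions. If a stratifying element $l$ with these properties exists, then ${\cal H}[l]$ is open and, as above, dense in $V$; since ${\cal H}[l]\subseteq{\cal N}^{}_{G,V}$ and ${\cal N}^{}_{G,V}$ is closed in $V$, its closure $V=\overline{{\cal H}[l]}$ lies in ${\cal N}^{}_{G,V}$, so ${\cal N}^{}_{G,V}=V$. Conversely, suppose ${\cal N}^{}_{G,V}=V$. The strata form a finite partition of ${\cal N}^{}_{G,V}=V$ into irreducible locally closed subsets, so their closures are finitely many irreducible closed subsets covering the irreducible variety $V$; hence one of these closures equals $V$, i.e. the corresponding stratum is dense and of dimension $\dim V$. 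Since $V\neq 0$, this dense stratum is not $\{0\}$, so by Theorem~\ref{stratum}(2) it equals ${\cal H}[l]$ for some stratifying element $l$, which then satisfies~(b).

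The argument is essentially mechanical once Theorem~\ref{stratum} and Corollary~\ref{dimen} are in hand; the only points requiring a little care are the implication (b)$\Rightarrow$(a), where I use both the irreducibility of ${\cal H}[l]$ and its being open in its closure, and the reverse direction of the existence statement, where I must exclude the trivial stratum $\{0\}$ as the candidate for the dense stratum. I do not anticipate any genuine obstacle.
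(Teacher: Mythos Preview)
Your proof is correct and follows essentially the same approach as the paper, which merely records that the corollary follows from \eqref{v}, \eqref{dim}, the finiteness of the strata, and the fact that each stratum is open in its closure. You have simply supplied the routine details behind that sentence, including the use of the irreducibility of ${\cal H}[l]$ (implicit in Theorem~\ref{stratum}) for (b)$\Rightarrow$(a) and the exclusion of the trivial stratum in the converse of the existence statement.
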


Finally, the inclusion ${\cal H}(l)\subseteq
V$ yields the following corollary.

\begin{cor}
If an element $l$ of $\mathfrak t(\mathbb Q)$
is a stratifying element, then
\begin{equation}  
\#\{l<0\} \cap\Phi_{\mathfrak g, T}\
\leqslant\hskip -1mm\sum_{\mu\in \{l<
1\}}\hskip -1mm\dim V_{\mu}. \label{ineq}
\end{equation}
\end{cor}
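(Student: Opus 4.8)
The plan is to obtain \eqref{ineq} by comparing the dimension formula of Corollary \ref{dimen} against the trivial dimension bound furnished by the inclusion ${\cal H}[l]\subseteq V$. First I would invoke Theorem \ref{stratum}(1)(a): since $l$ is a stratifying element, ${\cal H}[l]$ is a stratum of the null-cone ${\cal N}^{}_{G,V}$, so in particular it is a subvariety of $V$, and therefore $\dim {\cal H}[l]\leqslant\dim V$.

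Next I would evaluate the two sides separately. Corollary \ref{dimen} gives
\[
\dim {\cal H}[l]=\#\{l<0\}\cap\Phi_{\mathfrak g, T}+\sum_{\mu\in\{l\geqslant 1\}}\dim V_\mu ,
\]
while the weight decomposition \eqref{v} gives $\dim V=\sum_{\mu\in\Delta^{}_{V,T}}\dim V_\mu$. The one observation needed is that for each weight $\mu$ the quantity $\langle l,\mu\rangle$ is a single rational number, so the half-spaces $\{l\geqslant 1\}$ and $\{l<1\}$ partition $\Delta^{}_{V,T}$; hence
\[
\dim V=\sum_{\mu\in\{l\geqslant 1\}}\dim V_\mu+\sum_{\mu\in\{l<1\}}\dim V_\mu .
\]

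Finally I would substitute both expressions into the inequality $\dim {\cal H}[l]\leqslant\dim V$ and cancel the common summand $\sum_{\mu\in\{l\geqslant 1\}}\dim V_\mu$, which leaves precisely \eqref{ineq}. I expect no genuine obstacle here: the whole geometric content is already packaged in Corollary \ref{dimen}, and what remains is merely the elementary bookkeeping of splitting $\dim V$ according to the side of the affine hyperplane $\{l=1\}$ on which each weight lies. The sole point deserving a word of care is the disjointness and exhaustiveness of that split over $\Delta^{}_{V,T}$, which is immediate.
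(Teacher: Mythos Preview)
Your proposal is correct and is exactly the argument the paper uses: the one-line justification given there is ``the inclusion ${\cal H}[l]\subseteq V$ yields the following corollary,'' which unpacks to precisely the comparison of $\dim{\cal H}[l]$ from Corollary~\ref{dimen} with $\dim V=\sum_{\mu}\dim V_\mu$ that you wrote out.
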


{\bf 2.7.}\hskip 1.5mm  In the next section we will show that the family of all stratifying elements in $\mathfrak
t(\mathbb Q)$ is completely determined only by the geometric configuration of the system of weights
 $\Delta^{}_{V, T}$ (endowed with their multiplicities) and the system of roots $\Phi_{\mathfrak g, T}$
 in the Euclidean space $\mathfrak t(\mathbb Q)$, and obtain a simple geometric--combinatorial algorithm for describing this family.

 Namely, this algorithm allows one to describe constructively a finite set $\mathfrak
S$ of nonzero elements
 $l\in \mathfrak t(\mathbb Q)$ such that
 $V[l^+]\neq\varnothing $ for every $l\in
 \mathfrak S$ and that the sets $G\cdot V[l^+]^0$,
 where $l$ runs over $\mathfrak S$,
 exhaust, without repetition, the family of all strata in
 ${\cal N}^{}_{G, V}$ that are different from $\{0\}$.
By Theorem~\ref{stratum} this yields the following description for the null-cone:
\begin{equation}
{\cal N}^{}_{G, V}=\bigcup_{l\in \mathfrak
S}G\cdot V[l^+]. \label{nulvar}
\end{equation}
This description is constructive in the sense that our algorithm constructively describes all linear subspaces $V[l^+]$ from
\eqref{nulvar}. Since the application of this algorithm to the $G[l]$-module $V[l]$ for $l\in
\mathfrak S$ yields a constructive description, analogous to \eqref{nulvar},
 of the null-cone ${\cal
N}^{}_{G[l], V[l]}$, we obtain from
\eqref{fiber} a constructive description for every stratum $G\cdot V[l^+]^0$.

In view of Corollary \ref{dimen}  to Theorem
\ref{stratum}, the constructive description of the set $\mathfrak S$ also yields the following constructive method for calculating the dimension of the null-cone ${\cal N}^{}_{G, V}$
by means of calculating the number of roots and weights (with multiplicities) in appropriate half-spaces of the space
$\mathfrak t(\mathbb Q)$:
 \begin{prop}
 \begin{equation}
 \dim {\cal N}^{}_{G, V}=\max_{l\in\mathfrak S}
\bigl\{\#\{l<0\}\cap\Phi_{\mathfrak g, T} \
+\hskip -1mm \sum_{ \mu\in \{l\geqslant 1 \}
}\hskip -1mm \dim V_{\mu}\bigr\}.
 \label{dimzero}
 \end{equation}
 \end{prop}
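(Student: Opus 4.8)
The plan is to derive the formula in Proposition directly from the structural results already established, treating $\dim{\cal N}^{}_{G,V}$ as the maximum of the dimensions of its strata. First I would observe that ${\cal N}^{}_{G,V}$ is the union of finitely many strata, one of which is $\{0\}$, and the rest are of the form ${\cal H}[l]$ for stratifying elements $l$, by parts (1)(a) and (2) of Theorem~\ref{stratum}. Since the null-cone is the union of the closures $\overline{{\cal H}[l]}=G\cdot V[l^+]$ as in \eqref{nulvar}, and since the dimension of a variety equals the maximum over the dimensions of its irreducible components, I would argue that
\[
\dim{\cal N}^{}_{G,V}=\max_{l\in\mathfrak S}\dim\overline{{\cal H}[l]}=\max_{l\in\mathfrak S}\dim{\cal H}[l],
\]
where the last equality uses that each stratum ${\cal H}[l]$ is open in its closure (as recorded in the Introduction and following from Theorem~\ref{stratum}(1)(b)), so a stratum and its closure have the same dimension.

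The second step is simply to substitute the dimension formula for an individual stratum. By Corollary~\ref{dimen}, for each stratifying $l$ one has
\[
\dim{\cal H}[l]=\#\{l<0\}\cap\Phi_{\mathfrak g,T}+\sum_{\mu\in\{l\geqslant 1\}}\dim V_\mu,
\]
and inserting this into the displayed maximum yields exactly \eqref{dimzero}. Here I would note that restricting the maximum to $\mathfrak S$ rather than to all of $\mathfrak t(\mathbb Q)$ is legitimate precisely because, as stated in~2.7, the elements of $\mathfrak S$ exhaust (without repetition) all strata different from $\{0\}$; thus every stratum's dimension is already accounted for by some $l\in\mathfrak S$.

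The one point requiring a little care—and what I would flag as the main obstacle—is the contribution of the trivial stratum $\{0\}$. Since $\dim\{0\}=0$, it can only affect the maximum in the degenerate situation where ${\cal N}^{}_{G,V}=\{0\}$ and $\mathfrak S=\varnothing$, in which case the formula should be read with the convention that the maximum over the empty set is $0$. Assuming $V$ is a nontrivial $G$-module (equivalently $\Delta^{}_{V,T}\neq\{0\}$, by~2.3) one checks that ${\cal N}^{}_{G,V}\supsetneq\{0\}$, so at least one stratifying element exists and the set $\mathfrak S$ is nonempty; then the $\{0\}$ stratum contributes dimension $0$, which is dominated by the dimension of any positive-dimensional stratum, and the maximum over $\mathfrak S$ correctly computes $\dim{\cal N}^{}_{G,V}$. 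With this edge case settled, the proof is otherwise an immediate consequence of Theorem~\ref{stratum} and Corollary~\ref{dimen}.
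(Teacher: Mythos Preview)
Your argument is correct and is exactly the one the paper has in mind: the proposition is stated immediately after the phrase ``In view of Corollary~\ref{dimen} to Theorem~\ref{stratum}\ldots'', i.e., the paper treats \eqref{dimzero} as a direct consequence of the decomposition \eqref{nulvar} together with the stratum dimension formula \eqref{dim}. Your handling of the trivial stratum $\{0\}$ and the case $\mathfrak S=\varnothing$ is a reasonable elaboration of a point the paper leaves implicit.
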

 In addition, Corollary \ref{dimen} to
Theorem \ref{stratum}  yields a constructive method for describing all irreducible components of maximal dimension of the null-cone
${\cal N}^{}_{G, V}$: these components are exhausted, without repetition, by the varieties $G\cdot V[l^+]$, where $l\in\mathfrak S$ is an element for which the sum
  $\#\{l<0\}\cap\Phi_{\mathfrak g, T} \
+\hskip -1mm \sum_{ \mu\in \{l\geqslant 1 \}
}\hskip -1mm \dim V_{\mu}$
attains its maximum.

 \medskip

 {\bf 2.8.}\hskip 1.5mm In some important cases, it is a priori known that the strata are orbits. For example, this is so for adjoint representations and, more generally, for $\theta$-representations (see below Proposition 4). In these cases, the number of orbits in the null-cone
 ${\cal N}^{}_{G, V}$ is finite, and our algorithm turns into the classification algorithm for these orbits. The following proposition can be used (see below Example 5) for describing the representatives of such orbits.

 \begin{prop}\label{represent} Consider a
 $T$-weight basis
 $\{v_i\}_{i\in I}$ in $V$ and a set
 $\{X_\alpha\in \mathfrak g_{\alpha} \mid
 X_\alpha\neq 0, \alpha\in \Phi_{\mathfrak g, T}\}$
such that, for any $i$ and
 $\alpha$, the
 coordinates of the vector $X_{\alpha}\cdot v_i$
in the basis $\{v_i\}_{i\in I}$
lie in $\mathbb Q$.
Let $l$ be a stratifying element from~$\mathfrak t(\mathbb Q)$ and $J:=\{i\in
I\mid v_i\in V[l]\}$. Then, for any family of constants
 $\{c_j\in k\mid j\in
J\}$ that are algebraically independent over
 $\mathbb Q$, the vector $v:=\sum_{j\in J} c_jv_j$ lies in the stratum~${\cal H}^{}_{G, V}$.
\end{prop}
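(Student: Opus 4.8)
The plan is to show directly that $v\in V[l^+]^0$, since then $v\in G\cdot V[l^+]^0={\cal H}[l]$ by taking $g=e$ in \eqref{h}. First I would observe that $v=\sum_{j\in J}c_jv_j$ is a $k$-combination of basis vectors whose weights all lie in $\{l=1\}$ (that is the defining condition $v_i\in V[l]$ for $i\in J$), so $v\in V[l]\subseteq V[l^+]$, and under the projection $V[l^+]\to V[l]$ parallel to $\bigoplus_{\mu\in\{l>1\}}V_\mu$ its image is $v'=v$. By \eqref{fiber}, membership $v\in V[l^+]^0$ is therefore equivalent to $v\notin{\cal N}_{G[l],V[l]}$, and everything reduces to proving that $v$ is not a Hilbert nullform for the $G[l]$-action on $V[l]$.

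To establish that, I would exhibit a positive-degree $G[l]$-invariant polynomial $f$ on $V[l]$, with rational coefficients in the coordinates dual to $\{v_j\}_{j\in J}$, that does not vanish at $v$. A nonzero homogeneous invariant of positive degree exists because $l$ is stratifying: condition \eqref{strat} reads ${\cal N}_{G[l],V[l]}\neq V[l]$, and since the null-cone is the common zero locus of all positive-degree invariants, its being a proper subvariety forces $k[V[l]]^{G[l]}$ to contain a nonconstant element. The hypothesis on the rationality of the $X_\alpha\cdot v_i$ is exactly what lets one choose such an invariant over $\mathbb Q$, and supplying this rationality is the main step of the argument.

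Here is how I would secure the $\mathbb Q$-rationality. Since $G[l]$ is connected and $\operatorname{char}k=0$, a polynomial on $V[l]$ is $G[l]$-invariant if and only if it is annihilated by $\mathfrak g[l]=\mathfrak t[l]\oplus\bigoplus_{\alpha\in\{l=0\}}\mathfrak g_\alpha$. In the basis $\{v_j\}$ the algebra $\mathfrak g[l]$ acts by derivations with rational matrices: $\mathfrak t[l]$ acts diagonally with rational weight values (weights lie in the character lattice), while each root vector $X_\alpha$ with $\alpha\in\{l=0\}$ acts rationally by assumption. Hence, on the finite-dimensional space of homogeneous degree-$d$ polynomials (in the monomial basis dual to $\{v_j\}$), each of these derivations is a rational linear operator; the degree-$d$ invariants form the common kernel of a system of linear equations with rational coefficients and therefore admit a $\mathbb Q$-basis of polynomials with rational coefficients. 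Applying this in any degree $d>0$ with $k[V[l]]_d^{G[l]}\neq 0$ produces the required nonzero rational-coefficient invariant $f$.

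Finally, evaluating $f$ at $v=\sum_{j\in J}c_jv_j$ amounts to substituting $c_j$ for the dual coordinates, so $f(v)=f\bigl((c_j)_{j\in J}\bigr)$. As $f$ is a nonzero polynomial over $\mathbb Q$ and the $c_j$ are algebraically independent over $\mathbb Q$, no nontrivial rational polynomial relation among them can hold; hence $f(v)\neq 0$. Thus $v\notin{\cal N}_{G[l],V[l]}$, so $v\in V[l^+]^0\subseteq{\cal H}[l]$, as claimed. I expect the only genuine obstacle to be the bookkeeping that guarantees the existence of rational invariants; the reduction to $v'=v$ and the algebraic-independence conclusion are then routine.
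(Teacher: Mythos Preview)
Your proof is correct and follows essentially the same route as the paper's: reduce to showing $v\notin{\cal N}_{G[l],V[l]}$, use the Lie-algebra description of invariants together with the rationality hypothesis to produce a nonzero homogeneous $G[l]$-invariant with coefficients in $\mathbb Q$, and then invoke the algebraic independence of the $c_j$ to conclude $f(v)\neq 0$. Your write-up is in fact slightly more explicit than the paper's (you spell out the diagonal rational action of $\mathfrak t[l]$ and the linear-algebra reason for a $\mathbb Q$-basis of invariants), but the argument is the same.
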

\begin{proof} By construction, $v\in V[l]$.
By \eqref{fiber} and Theorem \ref{stratum}
the proof boils down to checking that $v\notin
{\cal N}^{}_{G[l], V[l]}$.

The family $\{v_j\}_{j\in J}$ is a basis in $V[l]$. Let $\{x_j\}_{j\in J}$
be the dual basis in  $V[l]^*$.  The monomials of degree $d$ in $\{x_j\}_{j\in J}$
form the basis of the space  $k[V[l]]_d$
of homogeneous polynomials of degree $d$ on $V[l]$.
For $\alpha\in \{l=0\}$, the element $X_\alpha$
acts linearly on $k[V[l]]_d$, and it follows from the hypothesis that its matrix has rational coefficients in this basis. Since the algebra of  $G[l]$-invariants on
$V[l]$ coincides with $\{f\in k[V[l]]\mid
\operatorname{Lie}(G[l])\cdot f=~0\}$, this implies that
 this algebra is generated by certain homogeneous polynomials in $\{x_j\}_{j\in
J}$ with coefficients in $\mathbb Q$. By
\eqref{strat},  there is a nonconstant polynomial $f$ among them. It follows from the algebraic independence condition that
$f(v)\neq 0$. Hence, $v\notin {\cal
N}^{}_{G[l], V[l]}$.
\end{proof}

Note that the basis and the set mentioned in Proposition
\ref{represent} always exist because there exists a $\mathbb Z$-form of
$\mathfrak g$ defined by the Chevalley basis and an admissible $\mathbb
Z$-form of $V$ (see, for example, \cite{Hu}).
For the adjoint representation, these basis and set are given by the
$\mathbb Z$-form of the algebra~$\mathfrak g$.

\medskip

{\bf 2.9.}\hskip 1.5mm If the number of orbits in a null-cone is finite, then the strata do not necessarily coincide with orbits (see Example 3 in Section 4). It would be interesting to find out whether it is always possible turn strata into orbits in such cases by changing the inner product $\langle \ {,}\ \rangle$.
 If this is so, then our algorithm yields a classification algorithm for the nullforms of any visible representation \cite{K1}, \cite{K2}.

\section*{3. Algorithm}

First, we present a number of additional notations and definitions concerning geometry and certain special graphs.

\smallskip

 {\bf 3.1.}\ Let $E$    be a finite-dimensional linear Euclidean space over $\mathbb Q$ and let
 $M$ be its nonempty subset. Introduce the following notation (Fig.\,1):
 \begin{list}{}{\setlength{\topsep}{2mm}
 \setlength{\parsep}{-1mm}
\setlength{\itemindent}{0mm}}
\item[$\bullet$] ${\rm aff}\,M$ is the affine hull of the subset $M$ in $E$ (i.e., the minimal linear variety in $E$ that contains $M$);
\item[$\bullet$] ${\rm conv}\hskip .8mm M$
is the convex hull of $M$ in
$E$,
 \item[$\bullet$] ${\rm
perp}\hskip .8mm M$
is the perpendicular dropped from zero onto
${\rm aff}\, M$ (i.e., the unique vector from
${\rm aff}\,M$ that is orthogonal to the difference of any two vectors from ${\rm aff}\,M$).
 \end{list}

\begin{center}
\leavevmode \epsfxsize =10cm
\epsffile{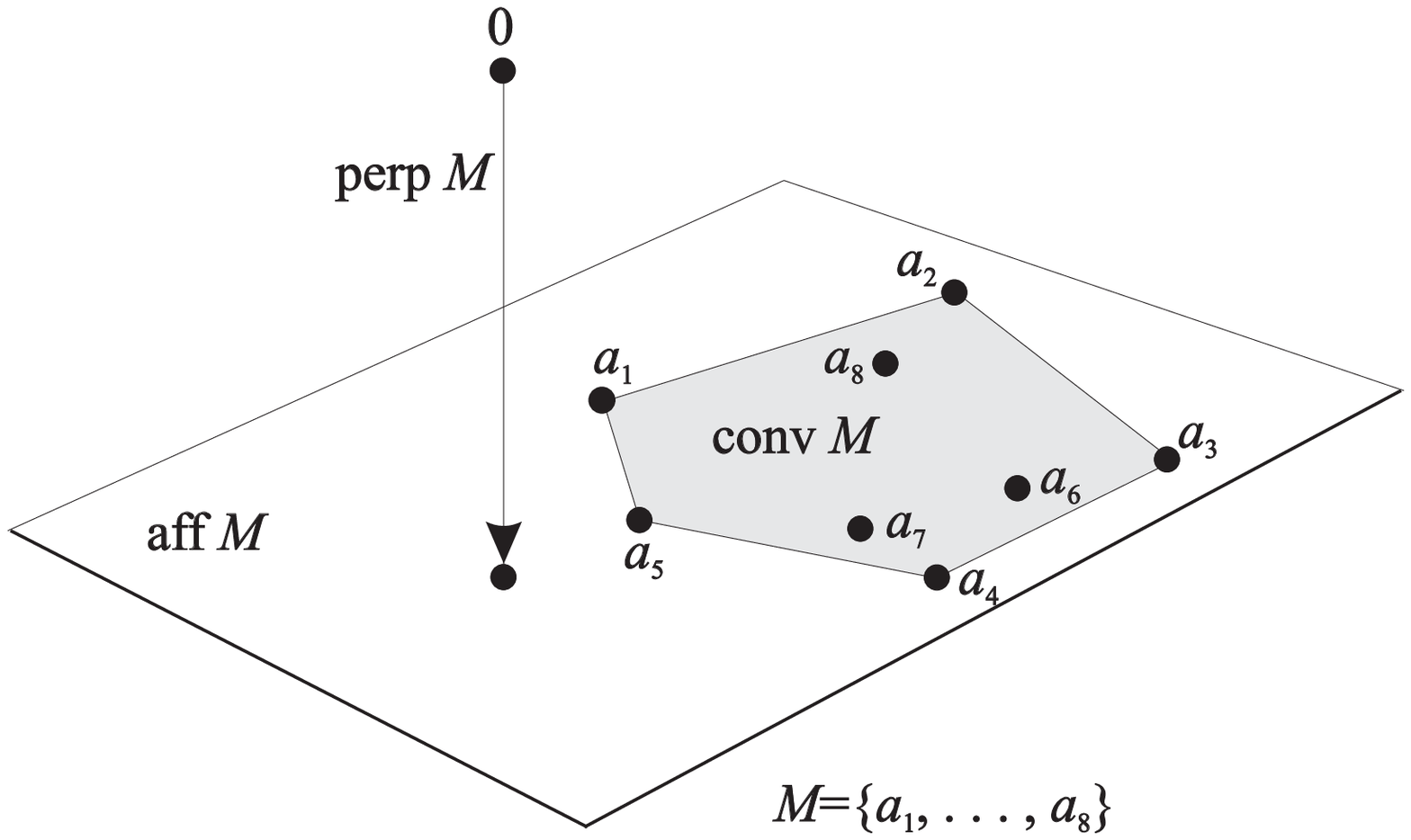} \centerline{\text{
Fig.\,1}}
\end{center}

{\bf 3.2.} \ Consider a rooted tree
$\Gamma$ (i.e., a connected graph without cycles and with a selected vertex called a root). Let $v_0,\ldots, v_d$  be the set of its vertices and let  $v_0$ be its root. 
 We endow each edge with an orientation (by specifying its beginning and end) directed ``from'' $v_0$,  i.e., so that any vertex different from the root is the end of precisely one edge (this requirement uniquely defines the orientation of edges). Figure 2 on the next page shows precisely such an orientation.

For any two different vertices $v_i$ and $v_j$
, there exists at most one route with the beginning at
$v_i$ and the end at $v_j$, that passes along edges. If such a route exists, we call it a
{\it geodesic} connecting
$v_i$ with $v_j$. The number of edges in a geodesic is called its {\it length}. For instance, the vertices $v_1$ and
$v_{20}$ in Fig.\,2 are not connected, but the vertices
$v_1$ and
$v_{21}$ are connected by the geodesic, and its length is 4.
 The root
$v_0$  is connected by a geodesic with any vertex.

A vertex is called the {\it end} vertex if it is not the beginning of any edge. For instance, the following vertices are end vertices in Fig.\,2:
 $v_{2}$, $v_{4}$,
$v_{6}$, $v_{8}$, $v_{10}$, $v_{11}$,
$v_{13}$, $v_{14}$, $v_{15}$, $v_{17}$,
$v_{18}$, $v_{20}$ ш $v_{21}$.

Each vertex is connected by a geodesic with at least one end vertex. The
{\it height} of a vertex $v_i$ is the greatest length of geodesics that connect $v_i$ with end vertices. For instance, in Fig.\,2 the root  $v_0$
has the height $5$, the vertex $v_3$
has the height $3$,  and the vertex $v_{11}$ has the height
$0$. The end vertices are exactly the vertices of height $0$.

\begin{center} \leavevmode
\epsfxsize =8.3cm \epsffile
{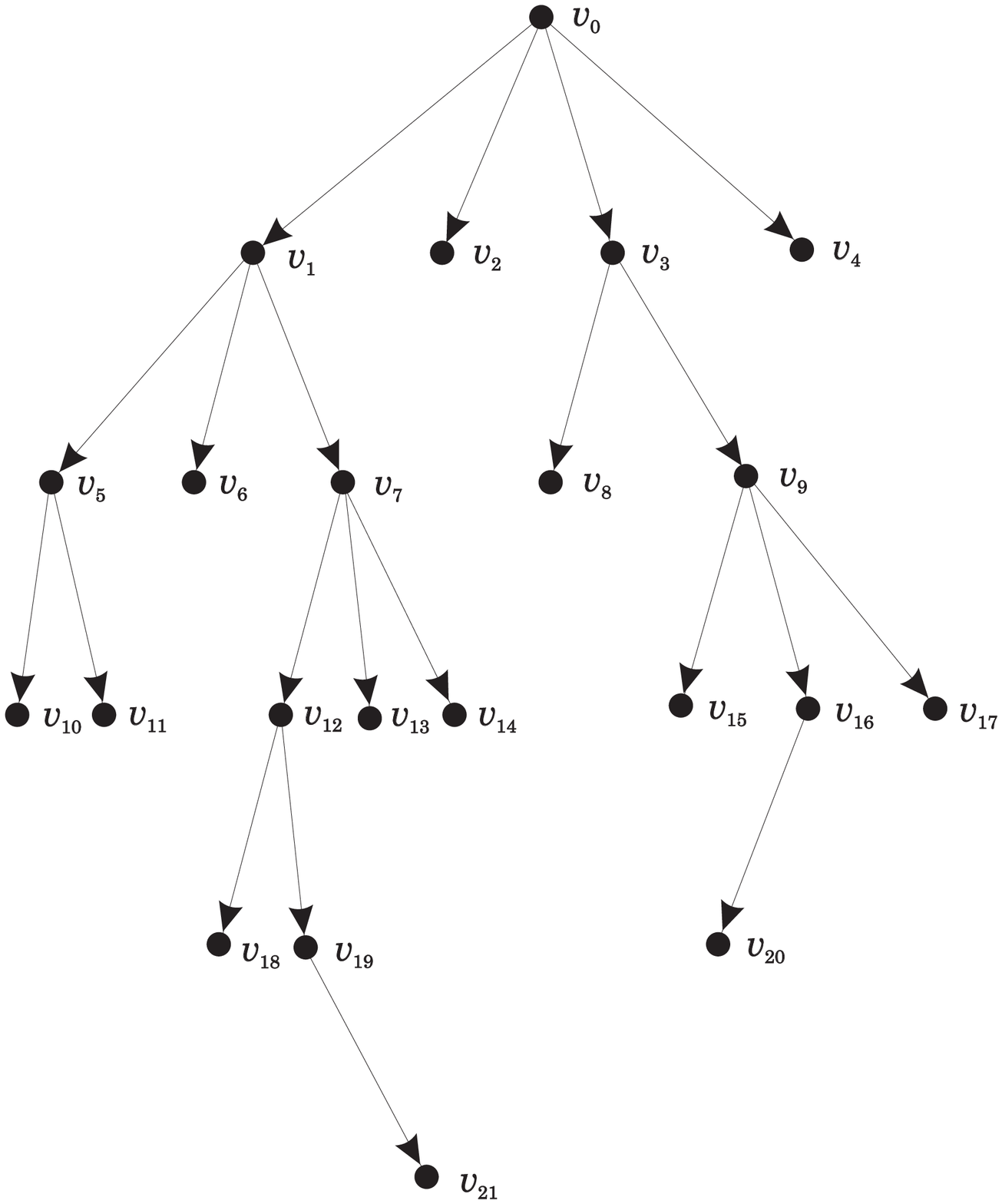}\vskip 4mm\centerline{\text{
Fig.\,2}}
\end{center}

If, moving along a geodesic $\gamma$,
we write out all successively encountered vertices, then the sequence of vertices obtained is called a {\it geodesic sequence} corresponding to
$\gamma$. For instance, if  $\gamma$
in Fig.\,2 is a geodesic connecting the root $v_0$ with the vertex $v_{21}$, then $\gamma$ corresponds to the the geodesic sequence of vertices
$v_0$, $v_1$, $v_7$, $v_{12}$, $v_{19}$,
$v_{21}$. Maximal geodesic sequences of vertices (i.e., those that cannot be embedded into geodesic sequences of greater length) correspond to $\gamma$ such that start at the root and end at an end vertex. For example, $v_0$, $v_3$, $v_8$
in Fig.\,2 is a maximal geodesic sequence of vertices.

\smallskip

 {\bf 3.3.}\hskip 1.5mm Passing to the construction of the algorithm, consider a nonzero element $l\in \mathfrak t(\mathbb
 Q)$. Suppose that the set of weights
ьэюцхёЄтю тхёют
 \begin{equation}
 \Delta^{}_{V, T}[l]:=\Delta^{}_{V, T}\cap
 \{l=1\}
 \label{dell}
 \end{equation}
that lies on the hyperplane $\{l=1\}$
is nonempty. By \eqref{vect}, this is equivalent to the condition $V[l]\neq
 \{0\}$.

Consider an orthogonal projection
 $$
 {\rm pr}^{}_l: \mathfrak t(\mathbb Q)\longrightarrow
 \mathfrak t[l](\mathbb Q).
 $$
 By \eqref{subtorus} and \eqref{vect},
the system of weights
 $\Delta^{}_{V[l], T[l]}$ of the torus $T[l]$ in
 $V[l]$ has the form
 \begin{equation}
 \Delta^{}_{V[l], T[l]}={\rm pr}^{}_l(\Delta^{}_{V,
 T}[l])
 \label{deltal}
 \end{equation}
and equality \eqref{deltal} holds with the mutiplicities of the weights
 taken into account if we assume that the latter are preserved under the projection
 ${\rm pr}^{}_l$.

\begin{prop} Let $M:= \Delta^{}_{V, T}[l]$.
 If $l$ is a stratifying element, then  {\rm(}see Fig.\;$3${\rm)}
\begin{gather}
{\rm perp}\hskip .8mm M\in {\rm conv} \hskip
.8mm M, \label{prop1}
\\
{\rm perp}\hskip .8mm M = {\rm perp}\hskip
.8mm \{l=1\}. \label{prop2}
\end{gather}
\end{prop}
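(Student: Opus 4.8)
The plan is to reduce both assertions to a single statement about the weight system $\Delta^{}_{V[l],T[l]}$ of the reductive group $G[l]$ on $V[l]$, and then transport that statement back to $M=\Delta^{}_{V,T}[l]$ by exploiting the fact that on the hyperplane $\{l=1\}$ the projection ${\rm pr}^{}_l$ is merely a translation. Indeed, every $\mu\in M$ satisfies $\langle l,\mu\rangle=1$, so ${\rm pr}^{}_l(\mu)=\mu-\langle l,\mu\rangle\langle l,l\rangle^{-1}l=\mu-\langle l,l\rangle^{-1}l$; hence ${\rm pr}^{}_l|^{}_{\{l=1\}}$ is the translation by the fixed vector $-\langle l,l\rangle^{-1}l$. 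By \eqref{deltal} this gives $\Delta^{}_{V[l],T[l]}={\rm pr}^{}_l(M)=M-\langle l,l\rangle^{-1}l$, and therefore ${\rm conv}\,\Delta^{}_{V[l],T[l]}={\rm conv}\,M-\langle l,l\rangle^{-1}l$.

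First I would bring in the stratifying hypothesis. By \eqref{strat} the null-cone ${\cal N}^{}_{G[l],V[l]}$ is a proper subset of $V[l]$, so there is a vector $v\in V[l]$ that is not a $G[l]$-nullform. Any one-parameter subgroup of $T[l]$ is one of $G[l]$, so $v$ is a fortiori not a $T[l]$-nullform; applying the Hilbert--Mumford criterion to the torus $T[l]$, the weights occurring in $v$ cannot be separated from the origin by a hyperplane, so the origin lies in the convex hull of those weights and hence $0\in{\rm conv}\,\Delta^{}_{V[l],T[l]}$. Combining this with the translation identity above yields $\langle l,l\rangle^{-1}l\in{\rm conv}\,M$.

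It then remains to identify the vector $\langle l,l\rangle^{-1}l$ geometrically. Since $\langle l,\langle l,l\rangle^{-1}l\rangle=1$ this vector lies on $\{l=1\}$, and being a multiple of $l$ it is orthogonal to the direction space $\{l=0\}$ of that hyperplane, so it is the foot of the perpendicular from the origin: ${\rm perp}\,\{l=1\}=\langle l,l\rangle^{-1}l$. Now $\langle l,l\rangle^{-1}l\in{\rm conv}\,M\subseteq{\rm aff}\,M\subseteq\{l=1\}$, and being orthogonal to $\{l=0\}$ it is orthogonal to the difference of any two vectors of ${\rm aff}\,M$; by the defining property of ${\rm perp}$ it therefore also equals ${\rm perp}\,M$, which is \eqref{prop2}, and since the same vector was shown to lie in ${\rm conv}\,M$ this gives \eqref{prop1} as well. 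I expect the only genuine step to be the reduction in the middle paragraph, passing from ``$l$ stratifying'' to $0\in{\rm conv}\,\Delta^{}_{V[l],T[l]}$: it rests on the elementary but essential observation that nonemptiness of the complement of the $G[l]$-null-cone already forces it at the level of the maximal torus $T[l]$, after which the translation bookkeeping and the orthogonality characterization of ${\rm perp}$ are routine.
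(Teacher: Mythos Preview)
Your proof is correct and follows essentially the same route as the paper's: both reduce to showing $0\in{\rm conv}\,\Delta^{}_{V[l],T[l]}$ via the torus version of Hilbert--Mumford (the paper phrases this contrapositively, citing \cite[5.4]{PV}, while you argue directly from a semistable vector), and then both pull this back through ${\rm pr}^{}_l$ to identify ${\rm perp}\,\{l=1\}$ inside ${\rm conv}\,M$ and conclude \eqref{prop1}--\eqref{prop2} from the uniqueness of the perpendicular. Your explicit translation formula ${\rm pr}^{}_l|^{}_{\{l=1\}}=\text{(shift by }-\langle l,l\rangle^{-1}l\text{)}$ is just a concrete rendering of what the paper states more geometrically.
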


\begin{center}
\leavevmode \epsfxsize =9.5cm
\epsffile{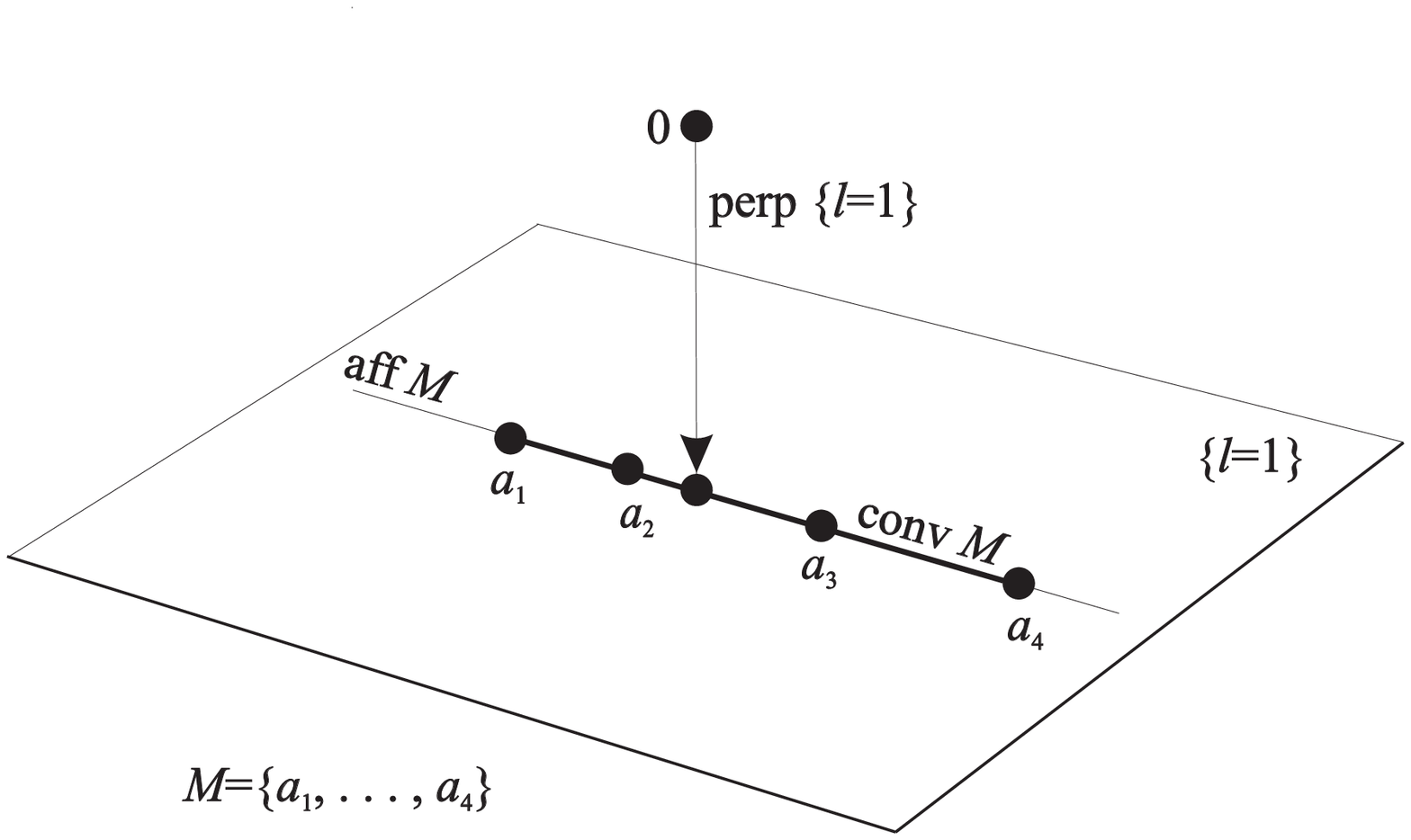} \centerline{\text
Fig.\,3}
\end{center}

\begin{proof}  If $l$ is a stratifying element, then
 \begin{equation}
 0\in {\rm conv}\hskip .8mm
 {\rm pr}^{}_l M.
 \label{incon}
 \end{equation}
Indeed, if inclusion \eqref{incon}  did not hold, then, by \eqref{deltal}, each $T[l]$-orbit in $V[l]$ would contain zero in its closure (see, for example,
 \cite[5.4]{PV}), and, hence,
 ${\cal N}^{}_{G[l],V[l]}=V[l]$ in spite of the fact that $l$ is a stratifying element.

Since the projection ${\rm pr}^{}_l$
commutes with  taking convex hull and the only vector of the hyperplane
$\{l=1\}$ that is mapped by  ${\rm pr}^{}_l$ to zero is ${\rm perp} \hskip
.8mm\{l=1\}$, inclusion \eqref{incon}
is equivalent to the inclusion
\begin{equation}
{\rm perp} \hskip .8mm\{l=1\}\in {\rm
conv}\, M. \label{incon1}
\end{equation}

Since $ {\rm aff}\hskip .8mm M \subseteq
\{l=1\}$, the vector ${\rm perp} \hskip
.8mm\{l=1\}$  is orthogonal to the difference of any two vectors from ${\rm aff}\hskip .8mm M.$ By the inclusion $ {\rm conv}\, M\subset {\rm
aff}\hskip .8mm M$ and the uniqueness of the perpendicular dropped from zero onto a linear variety, inclusion \eqref{incon1} implies \eqref{prop2}. In turn, \eqref{prop2} and
\eqref{incon1} imply \eqref{prop1}.
\end{proof}

{\bf 3.4.}\ We now introduce certain finite subsets in $\mathfrak
t(\mathbb Q)$ determined by the  $G$-module
$V$. Namely, consider the orbits of the Weyl group $W^{}_{G, T}$ on the set of all nonempty subsets of the system of weights
$\Delta^{}_{V, T}$ and fix a representative in each subset. Among these representatives, we choose those representatives $M$,
 that
 share the following properties:

 \begin{enumerate}[\hskip 5mm\rm(i)] \setlength{\itemsep}{-.9mm}
 \item
${\rm perp}\,M\neq 0$,
\item
${\rm
perp}\,M\in {\rm conv}\,M$,
\item
$M$
is the intersection of the system of weights
$\Delta^{}_{V, T}$  with the hyperplane
$\mathfrak t(\mathbb Q)$
passing through the end of the vector ${\rm perp}\, M$ and orthogonal to
this vector, i.e.,
$$M=\Delta^{}_{V, T}\cap \{l^{}_M=1\},
\hskip 1.5mm \text{ where }\
l^{}_M:=\frac{{\rm perp}\,M}{|\!|{\rm
perp}\,M|\!|^{2}},$$
\item
for
$l=l^{}_M$, inequality
\eqref{ineq} holds.
\end{enumerate}

When $M$ runs over the whole family of representatives
possessing
properties
(i)--(iv),  the vector $l^{}_M$ runs over a certain finite subset ${\mathfrak
L}^{}_{G, T, V}$ in $\mathfrak t(\mathbb Q)$.

If the $G$-module $V$ is trivial (i.e.,
$\Delta^{}_{V, T}=\{0\}$), then ${\mathfrak
L}^{}_{G, T, V}=\varnothing $ by (i). In particular, this is so if $G$ is the identity group $\{e\}$. If the $G$-module $V$ is nontrivial, then there exist nonzero nullforms in
$V$  (for example, vectors from $V_\mu$ for nonzero
$\mu\in \Delta^{}_{V, T}$)); hence, there are strata in
${\cal N}^{}_{G, V}$  that are different from $\{0\}$. According to the previous discussion, these strata are exhausted, without repetition, by varieties of the form ${\cal H}[l]$, where $l$
 is a stratifying vector from
${\mathfrak L}^{}_{G, T, V}$. In particular, ${\mathfrak L}^{}_{G, T,
V}\neq\varnothing$  in this case.

Thus, the algorithm for describing all strata of the null-cone ${\cal N}^{}_{G, V}$
of any nontrivial $G$-module $V$ will be obtained if we find an algorithm for verifying whether or not $l\in {\mathfrak L}_{G, T,
V}$  is a stratifying element.
That is what we shall now do.

\smallskip

{\bf 3.5.}\ To this end, we consider the following subset in
${\mathfrak L}^{}_{G, T, V}$:
\begin{equation} {\mathfrak M}^{}_{G, T,
V}:=\biggl\{l\in {\mathfrak L}^{}_{G, T, V}
\;\Big\vert\; \# \{l<0\}\cap \Phi_{\mathfrak g, T}\
=\hskip -1mm\sum_{\mu\in \{l< 1\}}\dim
V_{\mu}\biggr\}. \label{em}
\end{equation}

This subset ``governs'' the coincidence of ${\cal
N}^{}_{G, V}$ with $V$. Namely, if the
$G$-module $V$ is nontrivial, then \eqref{em} and Corollary
\ref{equal} of  Theorem \ref{stratum}
imply that the existence of a stratifying vector $l$ in ${\mathfrak
M}^{}_{G, T, V}$  is equivalent to the equality ${\cal N}^{}_{G,
V}=V$\!,  and that, for such $l$, the stratum ${\cal
H}[l]$  is open in $V$. Since different elements from
 ${\mathfrak
M}^{}_{G, T, V}$ lie in different $W_{G,
T}$-orbits, the openness of ${\cal H}[l]$ in
$V$ and claim (3) of Theorem \ref{stratum}
imply that {\it only one} such an element $l$ may exist. Note the particular case:
\begin{equation}
{\mathfrak M}^{}_{G, T, V}=\varnothing \
\Longrightarrow \ {\cal N}^{}_{G, V}\neq V.
\label{nothing}
\end{equation}
Implication \eqref{nothing} also holds for trivial $G$-modules $V$, since, for these modules, ${\cal N}^{}_{G, V}=\{0\}\neq V$ and
${\mathfrak M}^{}_{G, T, V}=\varnothing$
by virtue of ${\mathfrak L}^{}_{G, T,
V}=\varnothing$.

\smallskip

{\bf 3.6.}\ Henceforth, we assume that the $G$-module $V$
is nontrivial and pass to the algorithm for verifying whether or not $l\in {\mathfrak
L}^{}_{G, T, V}$ is a stratifying element. Since ${\cal N}^{}_{G[l], V[l]}$ is a closed subset in $V[l]$, condition
\eqref{strat} is equivalent to the fact that the element $l$ {\it satisfies} the inequality
\begin{equation}
\dim {\cal N}^{}_{G[l], V[l]}<\dim V[l].
\label{dime}
\end{equation}

In turn, because the number of strata is finite and these strata are open in their closures, inequality  \eqref{dime}  is equivalent to the fact that the dimension of all strata of the variety
${\cal N}^{}_{G[l], V[l]}$ is strictly less than
$\dim V[l]$.  As is shown above, the latter is equivalent to the fact that not a single element of the set ${\mathfrak M}_{G[l], T[l], V[l]}$
 determines the stratum of the null-cone ${\cal
N}^{}_{G[l], V[l]}$, i.e., {\it no element}
element $l'\in{\mathfrak M}_{G[l], T[l],
V[l]}$ {\it satisfies} the following inequality:
\begin{equation}
\dim {\cal N}^{}_{G[l][l'], V[l][l']}<\dim
V[l][l']. \label{dime1}
\end{equation}

Applying these arguments to the
$G[l][l']$-module $V[l][l']$, we obtain that
\eqref{dime1} {\it is not fulfilled} precisely when there exists an element $l''$ in the set
${\mathfrak
M}_{G[l][l'], T[l][l'], V[l][l']}$ that determi\-nes the stratum of the null-cone ${\cal N}^{}_{G[l][l'],
V[l][l']}$, i.e., such stratum that the following inequa\-li\-ty
{\it is fulfilled}:
\begin{equation*}
\dim {\cal N}^{}_{G[l][l'][l''],
V[l][l'][l'']}<\dim V[l][l'][l''].
\label{dime2}
\end{equation*}
An so on.

 \smallskip

{\bf 3.7.}\ This brings us to the consideration of all possible sequences of the form
\begin{equation}
l_1, \ldots, l_d, \quad \text{where }\ l_1=l,\
\text{and } \ l_{i+1}\in \mathfrak
M_{G[l_1]...[l_{i}], T[l_1]...[l_{i}],
V[l_1]...[l_{i}]} \ \, \text{for }
\,i=1,\ldots, d-1. \label{seq}
\end{equation}
By \eqref{subalg} and \eqref{vect},  for each of these sequences, the space $V[l_1]...[l_d]$
is nonzero and $G[l_1]...[l_d]$
is a connected reductive group of rank ${\rm rk}
G-d$.  Thus, $d\leqslant {\rm rk}\,
G$. The sequence \eqref{seq}
is maximal (i.e., nonextendable to the right) if and only if $\mathfrak
M_{G[l_1]...[l_{d}], T[l_1]...[l_{d}],
V[l_1]...[l_{d}]}=\varnothing$; in this case, it follows from \eqref{nothing} that
\begin{equation}
{\cal N}^{}_{G[l_1]...[l_{d}],
V[l_1]...[l_{d}]}\neq V[l_1]...[l_{d}].
\label{seqneq}
\end{equation}

{\bf 3.8.}\ Let us associate with vector $l\in
\mathfrak L^{}_{G, T, V}$ a rooted tree $\Gamma_l$, whose edges are directed ``from'' the root (see Subsection 3.2). This tree is successively constructed by the following procedure (see Fig.\,4 on the next page):

\begin{list}{}{\setlength{\topsep}{2mm}
 \setlength{\parsep}{-1mm}
\setlength{\itemindent}{0mm}}
\item[$\bullet$] $l$ is the root of $\Gamma_l$.
\item[$\bullet$] If the set
$\mathfrak M_{G[l], T[l], V[l]}$ is empty, then
$\Gamma_l$ consists only of the root.
\item[$\bullet$] If this set is nonempty and, say, consists of elements $a_1,\ldots,
a_s$, then these elements are vertices of
$\Gamma_l$, and exactly $s$
edges emanate from the root that have the ends $a_1,\ldots, a_s$.
\item[$\bullet$] Similarly, if the set
$\mathfrak M_{G[l][a_i], T[l][a_i],
V[l][a_i]}$ is empty, then there are no edges emanating from vertex $a_i$. \item[$\bullet$] If this set is nonempty and, say, consists of elements $b_1, \ldots, b_t$, then these elements are vertices of $\Gamma_l$ , and exactly  $t$  edges emanate from $a_i$ that have the ends
 $b_1, \ldots, b_t$.
\item[$\bullet$] And so on.
\item[$\bullet$] There are no other vertices and edges in the tree $\Gamma_l$.
\end{list}

\begin{center}
\leavevmode \epsfxsize =3.9cm \epsffile
{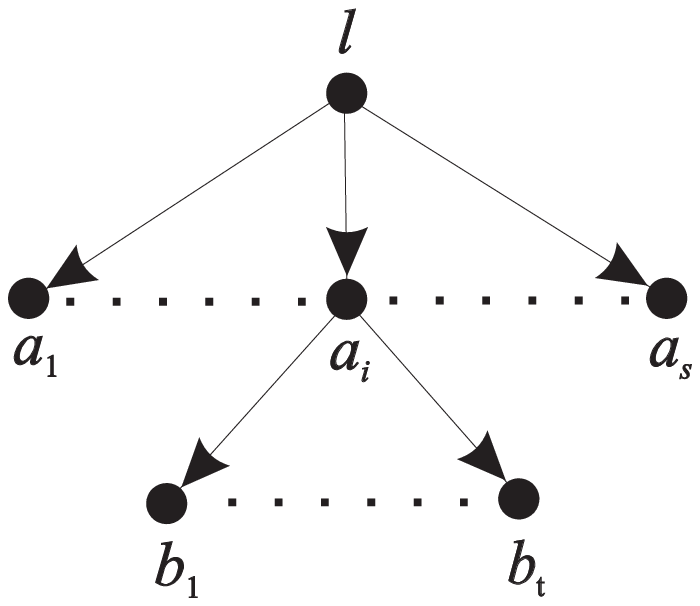} \vskip 1mm\centerline{\text
Fig.\,4}
\end{center}

{\bf 3.9.}\ The decision rule can be now formulated as follows.
\begin{thm}\label{signes} Let $l$ be an element of the set $\mathfrak L_{G, T, V}$. We assign plus or minus signs to the vertices of the tree $\Gamma_l$ {\rm (}see Fig.\,$5)$ by the following rule {\rm(}which uniquely defines this sign allocation{\rm):}
\begin{quote}
 A minus sign is assigned a vertex if and only if there exists an edge emanating from this vertex whose end has a plus sign {\rm(}in particular, all end vertices have a plus sign{\rm).}
\end{quote}
\indent Then, $l$
 is a stratifying element if and only if the root of the graph $\Gamma_l$
has a plus sign.
\end{thm}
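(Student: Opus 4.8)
The plan is to prove the theorem by induction on the height of vertices in $\Gamma_l$, translating the sign-assignment rule into the chain of equivalences already established in Subsections 3.5--3.7. Recall the key logical content: for any vector $m\in\mathfrak{L}_{G',T',V'}$ arising at some node (where $G',T',V'$ denote the triple $G[\cdots],T[\cdots],V[\cdots]$ accumulated along the geodesic from the root to this vertex), condition \eqref{strat} holds for $m$ --- i.e. $m$ is stratifying --- if and only if \emph{no} element of $\mathfrak{M}_{G'[m],T'[m],V'[m]}$ is itself stratifying. This is precisely the equivalence derived in 3.6: $\dim\mathcal{N}_{G'[m],V'[m]}<\dim V'[m]$ fails to be witnessed by any $m'\in\mathfrak{M}_{G'[m],T'[m],V'[m]}$ exactly when no such $m'$ satisfies the corresponding strict inequality \eqref{dime1} at the next level.

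\smallskip
\noindent\textbf{The induction.} First I would make precise that each vertex $v$ of $\Gamma_l$ corresponds, via its geodesic sequence from the root, to a well-defined triple $(G_v,T_v,V_v)$ together with a vector, and that the children of $v$ are exactly the elements of $\mathfrak{M}_{G_v,T_v,V_v}$ (this is the construction in 3.8). The claim to prove by downward induction on height is:
\begin{quote}
A vertex $v$ carries a plus sign if and only if $v$, regarded as an element of $\mathfrak{L}$ for its parent's triple, is a stratifying element (equivalently, \eqref{strat} holds for the triple attached to $v$).
\end{quote}
For the base case, an end vertex $v$ has height $0$, meaning $\mathfrak{M}_{G_v,T_v,V_v}=\varnothing$; by \eqref{nothing} this forces $\mathcal{N}_{G_v,V_v}\neq V_v$, so $v$ is stratifying, and by the sign rule end vertices receive a plus sign. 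The two statements agree. For the inductive step at a vertex $v$ of positive height, its children are the elements $a_1,\dots,a_s$ of $\mathfrak{M}_{G_v,T_v,V_v}$, each of strictly smaller height. By the inductive hypothesis, $a_i$ has a plus sign iff $a_i$ is stratifying for the triple attached to $v$. The sign rule assigns $v$ a minus sign iff some child $a_i$ has a plus sign, i.e. iff some $a_i\in\mathfrak{M}_{G_v,T_v,V_v}$ is stratifying. But by the equivalence from 3.6 recalled above, the existence of such a stratifying child is exactly the condition under which $v$ \emph{fails} to be stratifying. Hence $v$ gets a minus sign iff $v$ is not stratifying, equivalently $v$ gets a plus sign iff $v$ is stratifying, completing the step.

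\smallskip
\noindent\textbf{Conclusion and the main obstacle.} Applying the proven equivalence at the root $v_0=l$ gives exactly the assertion of the theorem: $l$ is stratifying iff the root has a plus sign. The bulk of the argument is bookkeeping, so the genuine content lies in justifying that the recursive descent terminates and that the equivalence of 3.6 applies verbatim at every node. Termination is guaranteed because, as noted after \eqref{seq}, each passage from a triple to $[m]$ drops the rank of the reductive group by one, so every geodesic has length at most $\mathrm{rk}\,G$ and $\Gamma_l$ is finite; thus the downward induction on height is well-founded. The one point demanding care --- and the main obstacle --- is verifying that at each node the subalgebra/submodule data $(G_v,T_v,V_v)$ again satisfies all the hypotheses of the theory (that $G_v$ is connected reductive with maximal torus $T_v$, that $V_v$ is a nonzero $G_v$-module, and that the inherited inner product restricts correctly), so that the definitions of $\mathfrak{L}$, $\mathfrak{M}$, and the stratifying condition are legitimately available at that node. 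This follows from \eqref{subalg}, \eqref{subtorus}, and \eqref{vect}, which show $G_v$ is connected reductive of the stated rank with root system $\Phi_{\mathfrak g,T}\cap\mathfrak t_v(\mathbb Q)$ and $V_v\neq\{0\}$; once this is checked, the equivalence chain of 3.5--3.7 transfers without modification and the induction goes through.
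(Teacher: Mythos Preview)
Your proof is correct and follows essentially the same route as the paper's own argument: induction on the height of vertices in $\Gamma_l$, with the base case at end vertices handled via \eqref{nothing} and the inductive step via the equivalence established in Subsection~3.6. (One minor terminological quibble: you write ``downward induction on height,'' but the induction in fact runs \emph{upward} from height~$0$ toward the root, since children have strictly smaller height than their parent; the argument itself is structured correctly regardless.)
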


\noindent{\bf Remark.} In fact, the proof and what has been said in Subsection 3.5
imply that the tree $\Gamma_l$
automatically possesses the following special property: {\it
at most one}  edge emanates from any vertex of $\Gamma_l$
whose other end has a plus sign. In particular, at most one edge leading to an end vertex emanates from any vertex.

\begin{center}
\leavevmode \epsfxsize =9.8cm \epsffile
{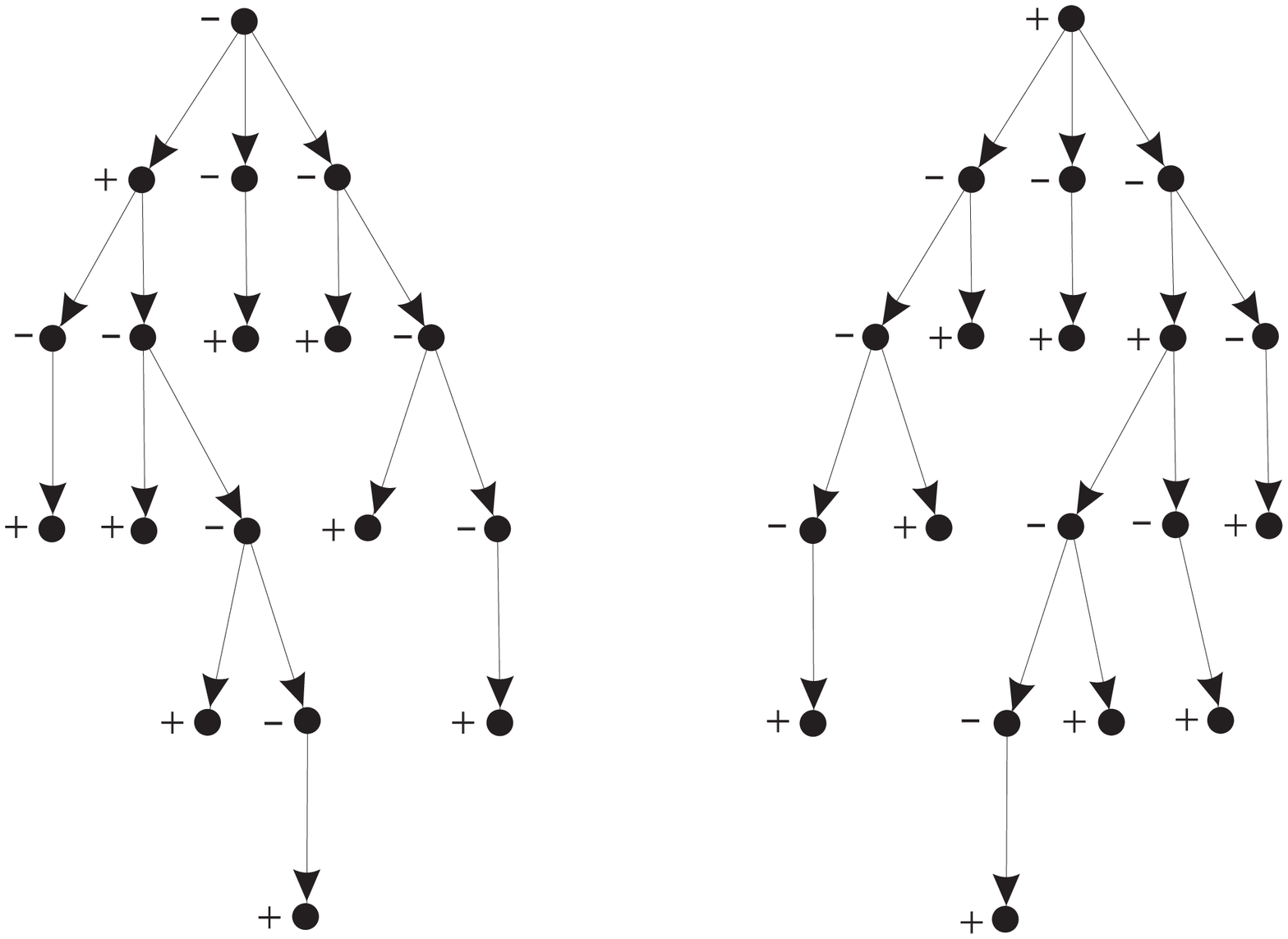} \vskip 1mm\centerline{\text
Fig.\,5}
\end{center}

\begin{proof} Sequences \eqref{seq}
are geodesic sequences of vertices of the tree $\Gamma_l$,
that start at the root. Consider a vertex $a$
and the geodesic $\gamma$ connecting the root
$l$ with $a$. Let $l_1, \ldots, l_d$, where
$l_d=a$,
be sequence \eqref{seq} corresponding to $\gamma$. Let us prove by induction on the height of the vertex $a$ that a plus sign is assigned the vertex $a$ if and only if condition \eqref{seqneq} holds. By the definition of the stratifying element, for a vertex of maximal height, i.e., for $a=l$, this gives the assertion of the theorem.

If $a$ has a zero height, then $a$
is an end vertex. Therefore, the sequence $l_1,
\ldots, l_d$ is maximal, and, hence, as was mentioned above, inequality
\eqref{seqneq} holds. On the other hand, by the sign allocation rule, the vertex
$a$ has a plus sign in this case. Thus, the assertion of the theorem holds for vertices of zero height.

Now, suppose that $a$ is a vertex of nonzero height. Then, sequence \eqref{seq}
is nonmaximal, i.e., the set $\mathfrak
M_{G[l_1]...[l_{d}], T[l_1]...[l_{d}],
V[l_1]...[l_{d}]}$ is nonempty. As was shown in Subsection 3.6, condi\-ti\-on \eqref{seqneq} holds if and only if any element $b\in\mathfrak M_{G[l_1]...[l_{d}],
T[l_1]...[l_{d}], V[l_1]...[l_{d}]}$ does not satisfy the condition
\begin{equation}
{\cal N}^{}_{G[l_1]...[l_{d}][b],
V[l_1]...[l_{d}][b]}\neq
V[l_1]...[l_{d}][b]. \label{seqneq1}
\end{equation}
On the other hand, it follows from the definition of the graph
$\Gamma_l$  that $b$ is its vertex and that the geodesic connecting the root with $b$ corresponds to the sequence $l_1, \ldots, l_d, b$. It follows from the definition of height that $b$ has a smaller height than $a$. Hence, by the induction hypothesis, condition
\eqref{seqneq1} is violated if and only if $b$ has a minus sign. Thus, condition \eqref{seqneq} is fulfilled if and only if all vertices
$b\in\mathfrak M_{G[l_1]...[l_{d}],
T[l_1]...[l_{d}], V[l_1]...[l_{d}]}$ have a minus sign. However, according to the sign allocation rule, the latter is equivalent to the fact that the vertex $a$ has a plus sign. This completes the proof.
\end{proof}

\setcounter{cor}{0}
\begin{cor}\label{noroots} Suppose that an element
 $l\in \mathfrak
L^{}_{G, T, V}$ is not orthogonal to any root. Then $l$ is a stratifying element.
\end{cor}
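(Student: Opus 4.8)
The plan is to use the hypothesis to collapse the reductive group $G[l]$ down to its maximal torus, after which the stratifying condition becomes a single convexity statement that is already built into the definition of $\mathfrak{L}^{}_{G,T,V}$.

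First I would observe that if $l$ is orthogonal to no root, then $\{l=0\}\cap\Phi_{\mathfrak g,T}=\varnothing$, so by \eqref{subalg} the subalgebra $\mathfrak g[l]$ reduces to $\mathfrak t[l]$; hence $G[l]=T[l]$ is a torus. Consequently the stratifying condition \eqref{strat} for $l$ reads $\mathcal N^{}_{T[l],V[l]}\neq V[l]$, i.e.\ it suffices to exhibit a single vector of $V[l]$ that is not a nullform for the torus $T[l]$.

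Next I would invoke the description of nullforms for a torus action (the Hilbert--Mumford criterion in the abelian case, cf.\ the fact used in the proof of the Proposition of Subsection 3.3, \cite[5.4]{PV}): a vector of $V[l]$ with full weight support fails to be a nullform precisely when $0\in{\rm conv}\,\Delta^{}_{V[l],T[l]}$. Thus $\mathcal N^{}_{T[l],V[l]}\neq V[l]$ is \emph{equivalent} to $0\in{\rm conv}\,\Delta^{}_{V[l],T[l]}$, and by \eqref{deltal} this is $0\in{\rm conv}\,{\rm pr}^{}_l M$ with $M=\Delta^{}_{V,T}[l]$. It then remains to verify $0\in{\rm conv}\,{\rm pr}^{}_l M$ directly from $l\in\mathfrak L^{}_{G,T,V}$. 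By the projection argument of Subsection 3.3, ${\rm pr}^{}_l$ commutes with taking convex hulls and carries exactly the point ${\rm perp}\,\{l=1\}$ of $\{l=1\}$ to $0$, so $0\in{\rm conv}\,{\rm pr}^{}_l M$ is equivalent to \eqref{incon1}, namely ${\rm perp}\,\{l=1\}\in{\rm conv}\,M$. Since $l=l^{}_M={\rm perp}\,M/|\!|{\rm perp}\,M|\!|^2$, a one-line computation gives ${\rm perp}\,\{l=1\}=l/|\!|l|\!|^{2}={\rm perp}\,M$, which is \eqref{prop2}; the required inclusion therefore becomes ${\rm perp}\,M\in{\rm conv}\,M$, which is exactly defining property (ii) of $\mathfrak L^{}_{G,T,V}$. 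This yields \eqref{strat}, so $l$ is stratifying.

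The only place demanding genuine care — the main obstacle — is that I must run the chain of Subsection 3.3 \emph{in reverse}: there ${\rm perp}\,M\in{\rm conv}\,M$ and the identity ${\rm perp}\,M={\rm perp}\,\{l=1\}$ are deduced from $l$ being stratifying, whereas here I deduce the stratifying property from property (ii). This reversal is legitimate only because $G[l]$ is a torus, so that the abelian nullform criterion provides an exact equivalence (not merely a necessary condition) between the convexity statement and $\mathcal N^{}_{G[l],V[l]}\neq V[l]$. For a nonabelian $G[l]$ no such equivalence holds, which is precisely why the general decision procedure requires the recursive tree construction of Theorem \ref{signes} rather than a direct convexity check.
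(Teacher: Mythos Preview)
Your argument is correct. Both proofs begin by extracting the convexity statement $0\in{\rm conv}\,\Delta^{}_{V[l],T[l]}$ from defining properties (ii) and (iii) of $\mathfrak L^{}_{G,T,V}$, but they diverge in how they finish. You verify the stratifying condition \eqref{strat} directly: since $G[l]=T[l]$ is a torus, the abelian Hilbert--Mumford criterion gives the exact equivalence $\mathcal N^{}_{T[l],V[l]}\neq V[l]\Longleftrightarrow 0\in{\rm conv}\,\Delta^{}_{V[l],T[l]}$, and you are done without ever touching Theorem~\ref{signes}. The paper instead stays inside its own machinery: from $0\in{\rm conv}\,\Delta^{}_{V[l],T[l]}$ it observes that every half-space $\{l'<1\}$ contains a weight, while the vanishing of $\Phi^{}_{\mathfrak g[l],T[l]}$ forces the left side of the equality in \eqref{em} to be zero; hence $\mathfrak M^{}_{G[l],T[l],V[l]}=\varnothing$, the tree $\Gamma_l$ collapses to its root, and Theorem~\ref{signes} assigns it a plus sign. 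Your route is the more elementary one and is logically independent of Theorem~\ref{signes}; the paper's route has the virtue of exhibiting the corollary as the trivial instance of the tree algorithm, which is pedagogically natural given where the corollary is placed.
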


\begin{proof}
By \eqref{dell} and \eqref{deltal} and conditions
(ii) and (iii) in the definition of the set
$\mathfrak L^{}_{G, T, V}$ (see Subsection 3.4), zero belongs to $\operatorname{conv}
\Delta^{}_{V[l], G[l]}$. Therefore, for any nonzero element $l'\in V[l]$, the half-space $\{l'<1\}$ contains at least one weight. Since, by the hypothesis, there are no roots in $V[l]$
condition \eqref{em} implies that ${\mathfrak M}^{}_{G[l], G[l],
V[l]}=\varnothing$. Thus, the tree
$\Gamma^{}_l$  with the allocation of signs defined in Theorem~\ref{signes} ЁрёёЄрэютъющ чэръют
 has the form shown in Fig.\,6.
\begin{center}
\leavevmode \epsfxsize=2.4mm \epsffile
{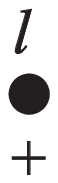} \centerline{\text Fig.\,6}
\end{center}
Hence, $l$
is a stratifying element.
\end{proof}

Since $\Phi^{}_{\mathfrak g,
T}=\varnothing$ for $G=T$, this and
\eqref{h}  imply the following corollary.

\begin{cor} \label{torus}{\rm (the case of a torus)}
If $G$ is a torus, $G=T$, then
\begin{list}{}{\setlength{\topsep}{1.4mm}
 \setlength{\parsep}{-1mm}
\setlength{\itemindent}{0mm}
\setlength{\labelwidth}{22mm}} \item[\rm(a)]
 all elements of the set $\mathfrak L^{}_{G,
T, V}$ are stratifying elements; \item[\rm (b)]
 the closures of the strata are linear subspaces of the form $\bigoplus_{\mu\in
\operatorname{conv} M}V_\mu$, where $M$
runs over those subsets in the system of weights
$\Delta^{}_{V, T}$, that possess properties {\rm(i)}, {\rm(ii)}, and {\rm(iii)} pointed out in Subsection {\rm (3.4)}. The strata themselves are complements, in these subspaces, to unions of finite sets of linear spaces of a similar form.
\end{list}
\end{cor}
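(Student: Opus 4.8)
The plan is to exploit the single degeneracy that distinguishes the torus case, namely $\Phi_{\mathfrak g,T}=\varnothing$, and to feed it into the machinery already established. For part (a) I would simply invoke Corollary~\ref{noroots}: its hypothesis is that $l\in\mathfrak L_{G,T,V}$ be orthogonal to no root, and for $G=T$ this is vacuously true because there are no roots at all. Hence every $l\in\mathfrak L_{G,T,V}$ is a stratifying element. (Equivalently, one reruns that proof: $G[l]$ is again a torus, so $\#\{l'<0\}\cap\Phi_{\mathfrak g[l],T[l]}=0$, while properties (ii)--(iii) place a weight in every half-space $\{l'<1\}$; thus the defining equality of \eqref{em} fails, $\mathfrak M_{G[l],T[l],V[l]}=\varnothing$, and the tree $\Gamma_l$ is a single root carrying a plus sign.)

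For part (b) I would first record the structural simplification. By \eqref{subalg}, $\Phi_{\mathfrak g,T}=\varnothing$ forces $\mathfrak p[l]=\mathfrak t=\mathfrak g$, so $P[l]=G=T$ and $G/P[l]$ is a point. Consequently the homogeneous bundle of Theorem~\ref{stratum} collapses, $G\times^{P[l]}\!V[l^+]^0\cong V[l^+]^0$, and since $V[l^+]^0$ is already $T$-stable one gets $\mathcal H[l]=V[l^+]^0$; by part (1)(c) of Theorem~\ref{stratum} its closure is $G\cdot V[l^+]=V[l^+]$. By \eqref{vect} this closure is the coordinate subspace $V[l^+]=\bigoplus_{\langle l_M,\mu\rangle\ge 1}V_\mu$ determined by $M=\Delta_{V,T}\cap\{l_M=1\}$, which is a linear subspace of the form asserted in (b).

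It remains to describe each stratum inside its closure, and here the only real input is the elementary combinatorial description of the null-cone of a torus: for a torus $S$ acting on $L$, a vector is a nullform exactly when $0\notin\operatorname{conv}$ of its support (cf.\ \cite[5.4]{PV}), so $\mathcal N_{S,L}$ is a finite union of coordinate subspaces $\bigoplus_{\mu\in N}L_\mu$ indexed by the subsets $N\subseteq\Delta_{L,S}$ with $0\notin\operatorname{conv} N$. Applying this to $S=T[l]$, $L=V[l]$ exhibits $\mathcal N_{T[l],V[l]}$ as such a union; by \eqref{fiber}, $\mathcal H[l]=V[l^+]\setminus\operatorname{pr}_l^{-1}(\mathcal N_{T[l],V[l]})$. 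Pulling each coordinate subspace of $V[l]$ back along the coordinate projection $\operatorname{pr}_l\colon V[l^+]\to V[l]$ yields again a coordinate subspace of $V[l^+]$ (the beyond-wall spaces $V_\mu$ with $\langle l_M,\mu\rangle>1$ remain free, while among the wall weights $\{l_M=1\}$ one keeps precisely those projecting into $N$). Thus $\mathcal H[l]$ is the complement in $V[l^+]$ of a finite union of subspaces of the same $\bigoplus V_\mu$ type, which is the second assertion of (b).

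I expect the genuine work to be confined to the bookkeeping of this last step: one must verify that the coordinate stratification of $\mathcal N_{T[l],V[l]}$ pulls back along $\operatorname{pr}_l$ to subspaces spanned by weight spaces $V_\mu$ (so the complemented sets are honestly ``of a similar form''), tracking how the free beyond-wall weights contribute to each. No hard geometry intervenes, since the reductive subgroups $G[l]$, $G[l][l']$, \ldots\ remain tori throughout; the substance is entirely the torus null-cone being a union of coordinate subspaces, together with the flattening of the bundles forced by $P[l]=G$.
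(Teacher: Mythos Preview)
Your argument is correct and follows exactly the route the paper indicates: the paper's entire proof is the one sentence ``Since $\Phi_{\mathfrak g,T}=\varnothing$ for $G=T$, this and \eqref{h} imply the following corollary,'' and you have unpacked precisely that --- part (a) is Corollary~\ref{noroots} applied vacuously, and part (b) comes from $P[l]=G$ collapsing the bundle so that $\mathcal H[l]=V[l^+]^0$ and $\overline{\mathcal H[l]}=V[l^+]$, together with the standard description of a torus null-cone as a union of coordinate subspaces. Your extra care in checking that the preimages under $\mathrm{pr}_l$ remain weight-space sums is more detail than the paper supplies, but it is the right detail and nothing is missing.
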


\section*{4. Examples}

Denote by $F_{d,n}$ the space of forms of degree $d\geqslant 0$ in variables
$x_1,\ldots,x_n$ over $k$. We consider this space as the
$G$-module for $G=\operatorname{GL}_n$ or
$\operatorname{SL}_n$ with respect to the natural action by linear
transformations of variables.

We put $\mathbb N:=\{0, 1, 2,
\ldots\}$ and $\mathbb Z^{}_{+}:=\{1, 2, 3,
\ldots\}$.

\begin{example}[the case of rank 1] If $\dim T=1$,
then, by Corollary \ref{noroots} to Theorem
\ref{signes}, all elements of the set
$\mathfrak L^{}_{G, T, V}$
 are stratifying elements.

Suppose that, in view of Corollary \ref{torus} to
 Theorem \ref{signes}, the group $G$ is not a torus. Then, passing to the universal covering, we can assume that
$G=\operatorname{SL}_2$ and $T$
is the diagonal maximal torus in $G$.
The group $\operatorname{Hom}(T, k^\times)$
is generated by the character
$\operatorname{diag}(a, a^{-1})\mapsto a$.
Denote it by $\varepsilon$. Then,
$\Phi_{\mathfrak g, T}=\{2\varepsilon,
-2\varepsilon\}$, and, since $W^{}_{G,
T}=\{1, -1\}$, we can assume that
$\mathfrak L^{}_{G, T,
V}=\{\frac{\varepsilon}{\langle
\mu,\varepsilon\rangle}\mid \mu\in
\Delta^{}_{V, T}\cap \mathbb
Z^{}_{+}\varepsilon\}$. Since
$\operatorname{char}k=0$,  we have, up to an isomorphism,
$V=F^{}_{d_1,2}\oplus\ldots\oplus
F^{}_{d_s,2}$. Let $d^{}_{\text{odd}}$
(respectively, $d^{}_{\text{even}}$)  be the greatest positive odd (respectively, even) number among
$d_1,\ldots, d_s$, if such numbers exist, and 0
otherwise. Since
$\Delta^{}_{F_{d,2}, T}=\{d\varepsilon,
(d-2)\varepsilon, \ldots, (2-d)\varepsilon,
-d\varepsilon\}$ and all the weights in
$\Delta^{}_{F_{d,2}, T}$ have mumtiplicity $1$,  we obtain that the number of strata in ${\cal N}^{}_{G, V}$ that are different from
$\{0\}$
is equal to
$\left[\frac{d^{}_{\text{эхў}}+1}{2}\right]+
\frac{d^{}_{\text{ў╕Є}}}{2}$, and the dimension of the stratum ${\cal N}[l]$, defined by the element $l=\frac{\varepsilon}{\langle
m\varepsilon, \varepsilon\rangle}\in
\mathfrak L^{}_{G, T, V}$, is equal to
$1+\sum^{}_{d_i\geqslant m}\rceil m,
d_i\lceil$, where $\rceil a, b\lceil$ with $a,
b\in \mathbb Z_+$, $a\leqslant b$, denotes the number of integers in the interval $[a, b]$,  that have the same parity as $b$.

For example, when $s=5$ and $(d_1, d_2, d_2, d_4,
d_5)=(2, 3, 3, 4, 5)$, we have
$d^{}_{\text{эхў}}=5$, $d^{}_{\text{ў╕Є}}=4$
and $\mathfrak L^{}_{G, T, V}$ consists of
$\left[\frac{d^{}_{\text{эхў}}+1}{2}\right]+
\frac{d^{}_{\text{ў╕Є}}}{2}=5$ elements
$l_1,\ldots, l_5$.  Thus, there are exactly five strata that are different from $\{0\}$. The weights from $\Delta^{}_{V, T}$ are
 depicted by circles in Fig.\,7; the numbers at these circles are the multiplicities of these weights. The roots are marked by arrows. One can choose $l_i$ so that the point
$\{l_i=1\}$ is such as shown in Fig.\,7. The dimension of the stratum ${\cal
H}[l_i]$ is greater by 1 than the sum of multiplicities of the circles lying to the left of the circle
$\{l_i=1\}$; i.e., it is equal to 11, 8, 6, 3, and 2
for $i=1,\ldots, 5$, respectively. This, in particular, yields $\dim{\cal N}^{}_{G, V}=11$.
\begin{center}
\leavevmode \epsfxsize=13cm \epsffile
{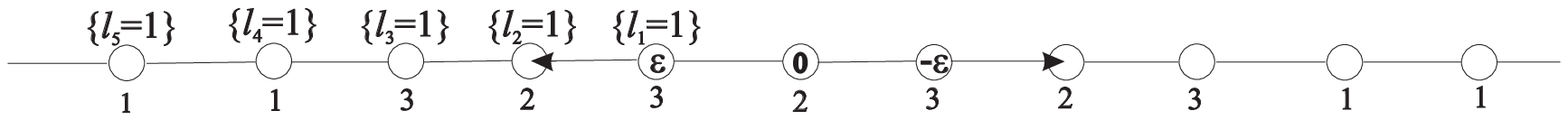} \centerline{\text Fig.\,7}
\end{center}
\end{example}

\begin{example}[the case of rank 2] Let $\dim
T=2$. By Corollary \ref{torus} to Theorem
\ref{signes}, we assume that $G$ is not a torus. The description of the straight lines $\{l=1\}$
for $l\in \mathfrak L^{}_{G, T, V}$
is equivalent, up to the action of the Weyl group, to the description of all possible straight lines $L$ on the plane
$\mathfrak t(\mathbb Q)$ that share the following properties:
 \begin{enumerate}[\hskip 5mm\rm(i)] \setlength{\itemsep}{-.9mm}
 \item $L$ does not pass through 0;
\item $L$ passes at least through one weight;
\item  the end of the perpendicular dropped from 0 onto $L$, is contained in a segment representing a convex hull of the weights lying on $L$.
    \end{enumerate}

If $L$ is such a line, then there are two possibilities: either $L$ is parallel to no root, or it is parallel to a certain root $\alpha$. In the first case, $l$
is a stratifying element by Corollary
\ref{noroots} to Theorem \ref{signes}. In the second case, it may happen that
$\mathfrak M^{}_{G[l], T[l], V[l]}\neq
\varnothing $. Namely, since
$\Delta^{}_{V, T}$ is invariant with respect to
$W^{}_{G, T}$  and, in particular, with respect to the reflection about a straight line orthogonal to $\alpha$,
 it follows from \eqref{em} that this occurs precisely when $L$ contains exactly two different single weights (in Fig.\,8, the weights are depicted by dark, and the roots, by light circles).
\begin{center}
\leavevmode \epsfxsize =6.5cm \epsffile
{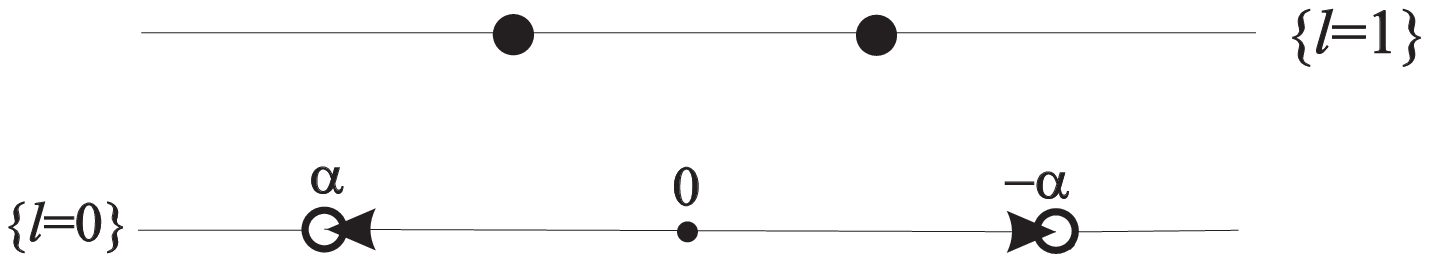} \centerline{\text Fig.~8}
\end{center}
In this case, $\mathfrak
M^{}_{G[l], T[l], V[l]}$ consists of a single element $l'$. Since, obviously, $\mathfrak M^{}_{G[l][l'],
T[l][l'], V[l][l']}\linebreak=\varnothing$, in this case the tree
$\Gamma_l$ with an appropriate sign allocation has the form shown in Fig.\,9.
\begin{center}
\leavevmode \epsfxsize =7.1mm \epsffile
{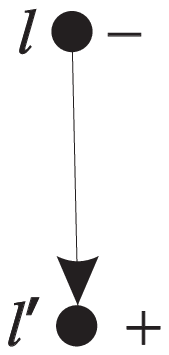} \centerline{\text Fig.~9}
\end{center}
Hence, by Theorem~\ref{signes},
$l$ is not a stratifying element.

\end{example}
\begin{example}[dependence on the norm; null-cones with a finite number of orbits]  Let $G={\rm
GL}_2$ and $V=F^{}_{1,2}\oplus
\wedge^2F^{}_{1,2}$.  There are exactly three nonzero $G$-orbits in $V$: ${\cal O}_1:=G\cdot
x_1,\  {\cal O}_2:=G\cdot (x_1\wedge x_2)$, and
${\cal O}_3:=G\cdot (x_1 + x_1\wedge x_2)$.
Let $T$ be the diagonal maximal torus in
$G$ and let $\varepsilon_i\in
\operatorname{Hom}(T, k^\times)$ be the weight of the variable $x_i$. The weights $\varepsilon_1$ and
$\varepsilon_2$ form a basis in the lattice
$\operatorname{Hom}(T, k^\times)$ and thereby in $\mathfrak t(\mathbb Q)$. The Weyl group
$W^{}_{G,T}$ has order 2 and acts by permutations of $\varepsilon_1$ and
$\varepsilon_2$. The sequence of vectors
$x_1$, $x_2$, $x_1\wedge x_2$ is a weight basis in $V$. Therefore, the system of weights has the form $\Delta^{}_{V, T}=\{\varepsilon_1,
\varepsilon_2,
\varepsilon_1+\varepsilon_2\}$  and  the multiplicity of every weight is $1$. The system of roots has the form
$\Phi^{}_{\mathfrak g,
T}=\{\varepsilon_1-\varepsilon_2,
\varepsilon_2-\varepsilon_1\}$. Positive definite $W^{}_{G, T}$-invariant inner products on $\mathfrak t(\mathbb
Q)$ are classified by the pairs of rational numbers $(a, b)\in\mathbb Q^2$,
satisfying the conditions $a>0$ and $a^2>b^2$:
to a pair $(a, b)$ is assigned the inner multiplication
for which $\begin{bmatrix}a & b\\b & a\end{bmatrix}$ is the Gram matrix of the basis
$\varepsilon_1$, $\varepsilon_2$.

Thus, in this case, the weights and the roots are depicted by dark and light circles, respectively (Fig.\,10); the left part of Fig.\,10 illustrates the case when $b>0$, while the right part, the case when
$b\leqslant 0$:

\begin{center}
\leavevmode \epsfxsize =9.8cm \epsffile
{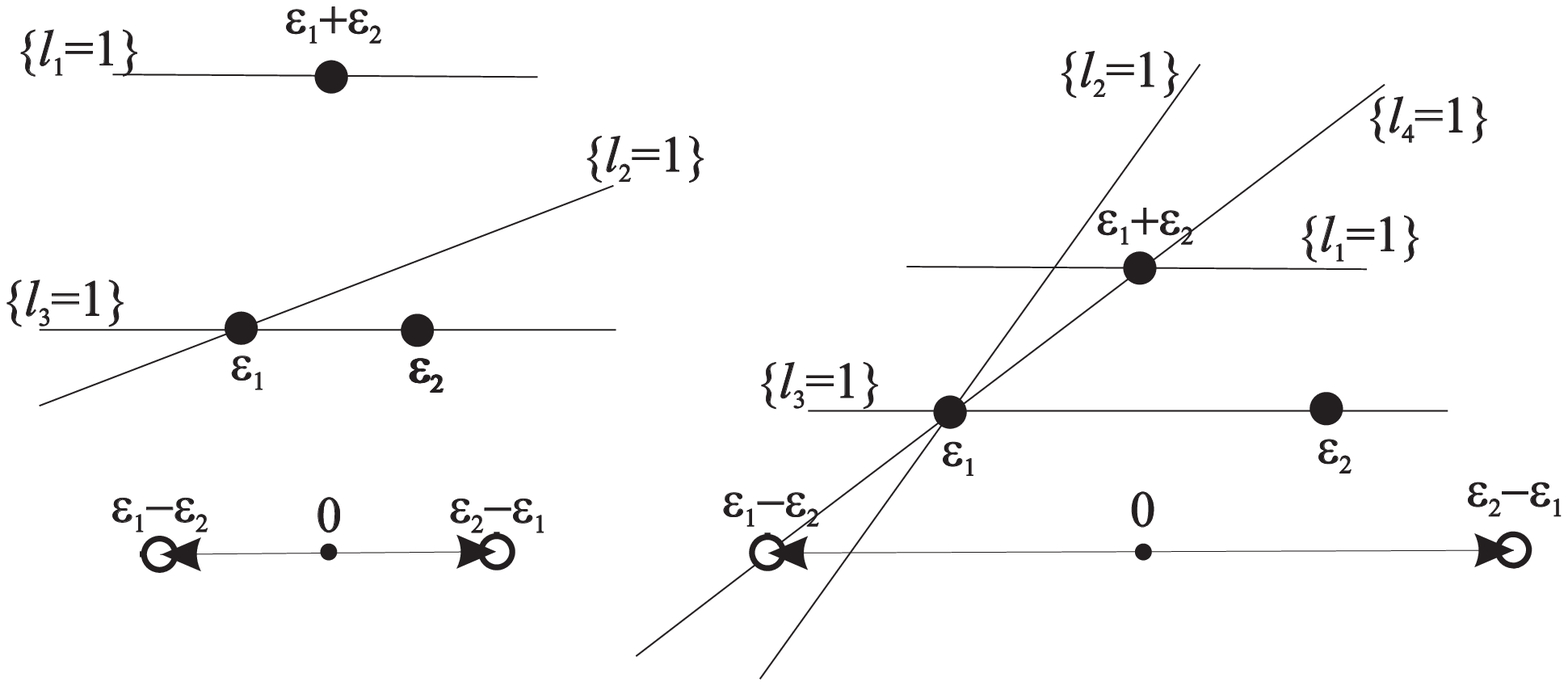} \centerline{\text Fig.\,10}
\end{center}

Figure 10 shows that the set
$\mathfrak L^{}_{G, T, V}$ consists of three elements $l_1, l_2$, and $l_3$ for $b>0$ and of four elements $l_1, l_2, l_3$, and $l_4$ for
$b\leqslant 0$,  and that, subjecting these elements, if necessary, to transformations from the Weyl group, one can assume that the straight lines $\{l_i=1\}$ are such as they are depicted in Fig.\,10. According to Example~2, if $b>0$, then there are exactly two strata in the null-cone
${\cal N}^{}_{G, V}$ that are different from $\{0\}$, namely, ${\cal
H}[l_1]$ and ${\cal H}[l_2]$; if
$b\leqslant 0$, then there are exactly three such strata, namely,
${\cal H}[l_1]$, ${\cal H}[l_2]$, and ${\cal
H}[l_4]$. It is clear that, if $b>0$, then ${\cal
H}[l_1]={\cal O}_2$, and ${\cal H}[l_2]={\cal
O}_1\cup {\cal O}_3$, whereas, if $b\leqslant 0$,
then ${\cal H}[l_1]={\cal O}_2$, ${\cal
H}[l_2]={\cal O}_1$ and ${\cal H}[l_4]={\cal
O}_3$.

Thus, in the general case, the strata depend on the norm chosen and, if the null-cone contains only a finite number of orbits, then some strata may even not be orbits (see \cite[Remark 4.10]{H2}).
\end{example}


\begin{example}
[ternary forms] Let $G={\rm SL}_3$ and
$V=F^{}_{d, 3}$, $d\geqslant 1$. Let $T$
be the diagonal maximal torus in $G$ and
$\varepsilon_i\in {\rm Hom} (T, k^\times)$
be the weight of the variable $x_i$. The space
$\mathfrak t(\mathbb Q)$ is two-dimensional. The weights
$\varepsilon_1$, $\varepsilon_2$, and
$\varepsilon_3$ generate the lattice ${\rm Hom}
(T, k^\times)$ , and the sum of these weights is equal to~$0$.
The system of roots $\Phi_{T, \mathfrak g}$
 is the set
$\{\varepsilon_i-\varepsilon_j\mid i\neq
j\}$. The set $\{\varepsilon_1,
\varepsilon_2, \varepsilon_3\}$ is preserved by the Weyl group $W_{G, T}$, and any permutation of the vectors $\varepsilon_1,
\varepsilon_2$, and $\varepsilon_3$ is performed by a certain element from $W_{G, T}$. Therefore,
$\parallel \varepsilon_1\parallel=\parallel
\varepsilon_2\parallel=\parallel
\varepsilon_3\parallel$, and
$\angle\{\varepsilon_i,
\varepsilon_j\}=2\pi/3$ for $i\neq j$,  which determines $\langle \ {,}\ \rangle$
uniquely up to proportionality. If $c_1, c_2, c_3\in
\mathbb Q$, then we denote the vector
$c_1\varepsilon_1 + c_2\varepsilon_2 +
c_3\varepsilon_3\in \mathfrak t(\mathbb Q)$
by $(c_1c_2c_3)$. Then, the system of weights is expressed as $\Delta^{}_{V,
T}=\{(c_1c_2c_3)\mid  c_1+c_2+c_3=d,\ c_i\in
\mathbb N\}$.  The multiplicity of the weight $(c_1c_2c_3)$
is equal to 1, and the set of monomials
$\{x_1^{c_1}x_2^{c_2}x_3^{c_3}\}$
 is linearly independent and spans $V_{(c_1 c_2c_3)}$.   Thus, the system of weights is given by the set of vertices of regular triangles with the side
$\sqrt{3}\parallel\varepsilon_i\parallel$
that form a partition of the regular triangle with vertices $(d00)$, $(0d0)$,
$(00d)$. In Fig.\,11 on the next page, the weights are depicted by dark circles and the roots, by light circles.

The straight lines $\{l=1\}$ for $l\in \mathfrak
L^{}_{G, T, V}$ and the stratifying elements in $\mathfrak L^{}_{G, T, V}$ are described according to Example 2. There is a unique, up to the action of the Weyl group, case when $l$ is not a stratifying element: in this case, the straight line $L_0:=\{l=1\}$ passes through the weights $(1(d-1)0)$ and
$(0(d-1)1)$.

\begin{center}
\leavevmode \epsfxsize =8.2cm \epsffile
{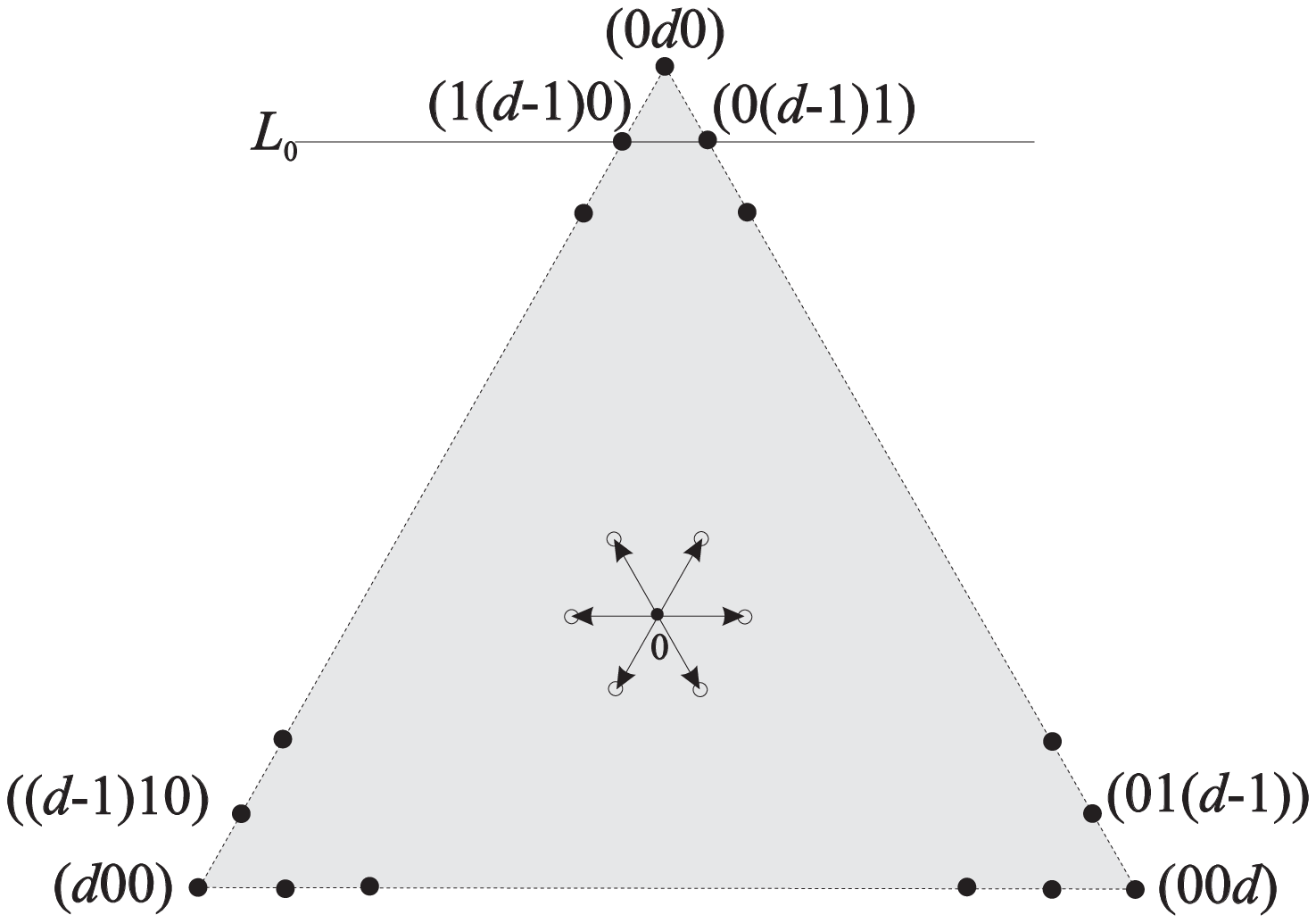} \centerline{\text Fig.11}
\end{center}

For example, in the case of ternary quartics (i.e., when $d=4$), one can easily verify that
$\mathfrak L_{G, T, V}$ contains exactly 12 elements $l_1,\ldots, l_{12}$ and that, subjecting them, if necessary, to transformations from the Weyl group, one can assume that the straight lines  $\{l_i=1\}$  are such as they are depicted in Fig.\,12.

\begin{center}
\leavevmode \epsfxsize =8.8cm \epsffile
{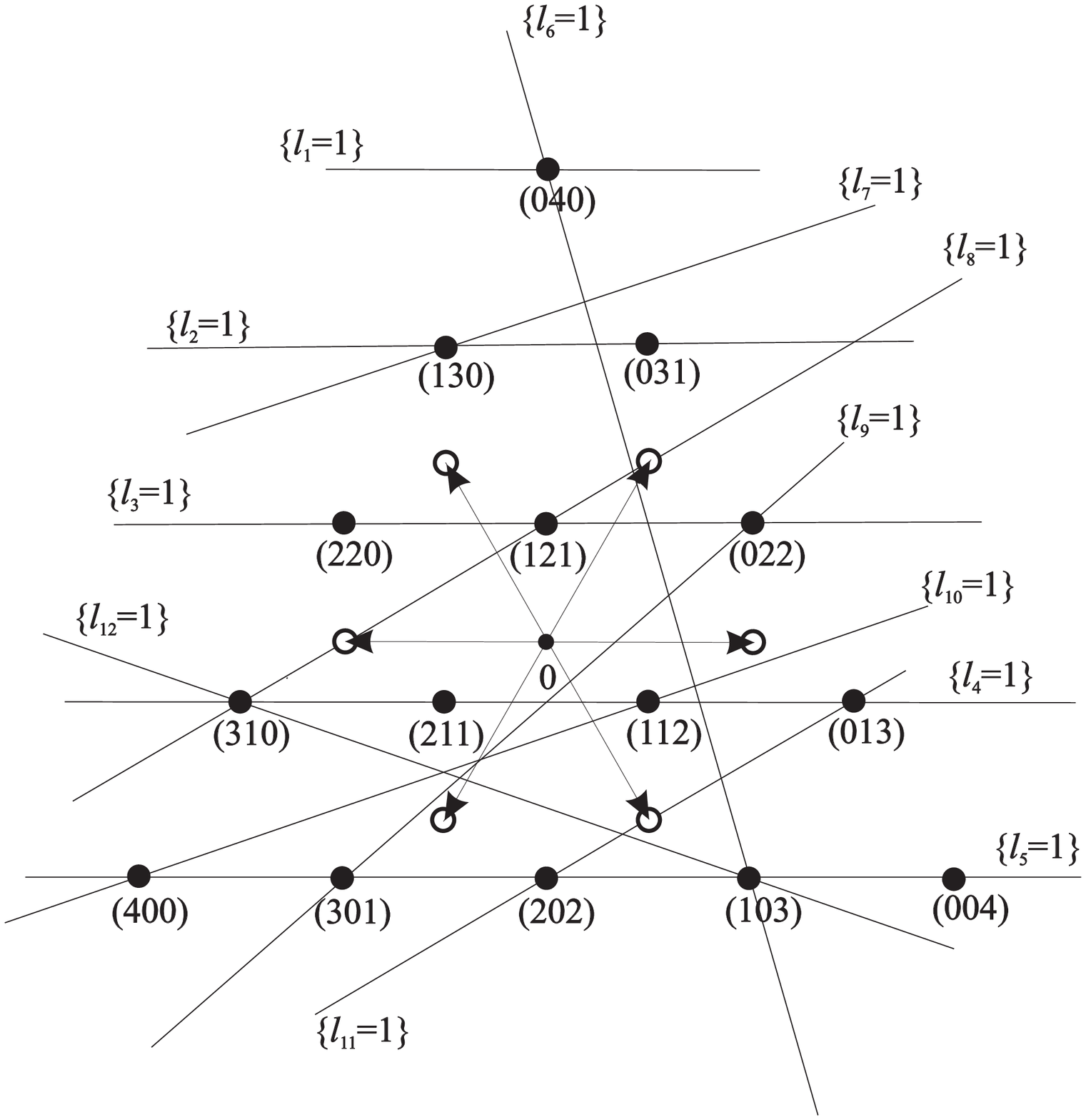} \vskip -2mm\centerline{\text Fig.12}
\end{center}

\noindent Here, only $l_2$ is not a stratifying element. Thus, there are exactly 11 strata in this case that are different from
$\{0\}$ , and these strata are given by ${\cal
H}[l_i]$, $1\leqslant i\leqslant 12$, $i\neq
2$. Formula \eqref{dim} gives their dimensions:
$\dim {\cal H}[l_i] = 3, 8, 11, 7, 9, 5, 9,
10, 10, 7$, and $8$ for $i=1, 3, 4,
5, 6, 7, 8, 9, 10, 11$, and $12$, respectively. Hence, $\dim{\cal
N}^{}_{G, V}=11$ and ${\cal N}^{}_{G, V}$ has a unique irredu\-ci\-b\-le component of maximal dimension (one can show that
${\cal N}^{}_{G, V}$ has another irredu\-ci\-b\-le component of smaller dimension
\cite{Hi}).
\end{example}

\begin{example}[nilpotent elements in reductive Lie algebras] If $V=\mathfrak
g$, then ${\cal N}^{}_{G, V}$ is the variety of all nilpotent elements of the Lie algebra $\mathfrak g$. It is well known \cite{H1},
\cite{Sl}
that, in this case, the strata of the null-cone ${\cal N}^{}_{G,
V}$ that are different from
$\{0\}$ are exactly the nonzero $G$-orbits
contained in
${\cal N}^{}_{G,
V}$ , i.e., the conjugacy classes of
nonzero nilpotent elements of the Lie algebra~$\mathfrak g$.
Hence, in this case our algorithm turns into the classification algorithm for the conjugacy classes of nilpotent elements in reductive Lie algebras
$\mathfrak g$, which is reduced to only simple geometric--combinatorial operations with the system of roots $\Phi_{\mathfrak
g, T}$. For the first time, such a classification was obtained in \cite{D} by a different method, which was later improved in \cite{BC}.  This classification is a particular case of the classification of the orbits of nullforms for the so-called
$\theta$-representations (see\,\cite{V3}). To obtain the latter classification, Dynkin's method was improved and extended in \cite{V1}, \cite{V2}, \cite{GV}  to the case of $\theta$-representations. A partial computer  implementation of this method was obtained in \cite{L2}.

We will illustrate the solution of this problem by means of our algorithm through the example of the simple exceptional Lie algebra $\mathfrak g$ of type ${\sf G}_2$.
In this case, the weights are depicted by dark circles in Fig.\,13.
\begin{center}
\leavevmode \epsfxsize =8.6cm \epsffile
{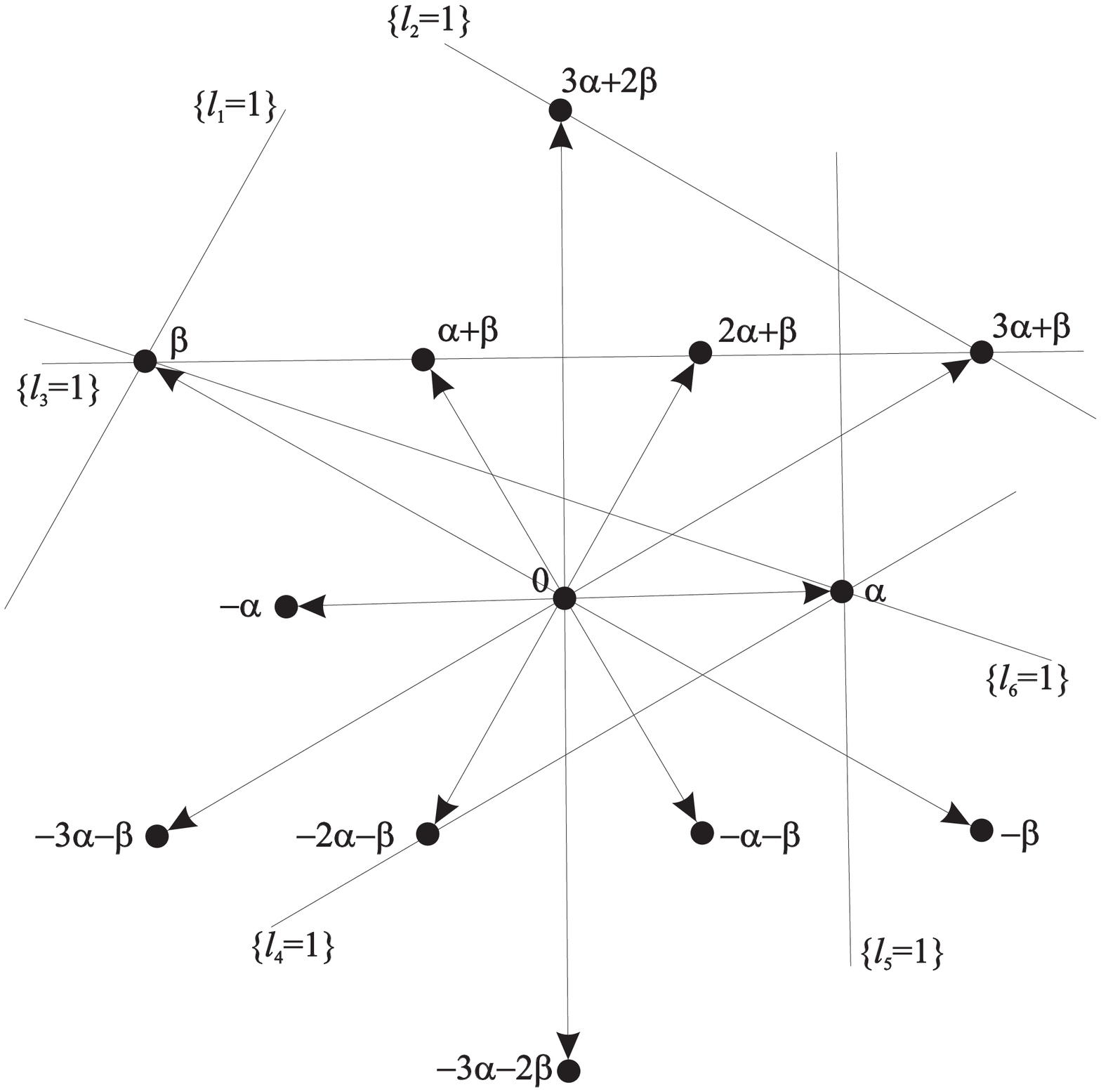} \centerline{\text Fig.\,13}
\end{center}
Here, the roots are nonzero weights. Using Fig.\,13, one can easily verify that the set $\mathfrak L_{G, T, V}$ contains exactly six elements $l_1,\ldots, l_{6}$ and that, subjecting them, if necessary, to transformations from the Weyl group, one can assume that the straight lines $\{l_i=1\}$ are as is shown in this figure. Of these elements, only $l_6$ is not orthogonal to any root; for all the other elements
$l_i$, the line $\{l_i=1\}$ contains exactly two roots precisely for $i=2$ ш 4.
Hence, the total number of strata different from
$\{0\}$ is four (therefore, there are precisely four nonzero conjugacy classes of nonzero nilpotent elements), namely,  ${\cal H}[l_i]$,
where $i=1, 3, 5$ and $6$.

The representatives of these four classes are obtained by means of what has been said in Subsection 2.8. Namely, it follows from Proposition
\ref{represent} that, if
$x_\gamma\in\mathfrak g_\gamma$
is an element of the Chevalley basis of the algebra $\mathfrak g$, then the representatives of the classes ${\cal H}[l_1]$,
${\cal H}[l_3]$,  ${\cal H}[l_5]$, and ${\cal
H}[l_6]$ are $x_\beta$,
$t_\beta x_\beta + t_{\alpha +\beta}
x_{\alpha +\beta} + t_{2\alpha +\beta}
x_{2\alpha +\beta} + x_{3\alpha +\beta}$,
$x_\alpha$, and $x_\alpha + t_\beta x_\beta$, respectively, where $t_\beta$, $t_{\alpha +\beta}$,
$t_{2\alpha +\beta}$ are elements from the field $k$ that are algebraically independent over~$\mathbb Q$.

 A simple additional argument shows that these representatives can be simplified. Namely, according to
\eqref{h} and \eqref{fiber}, if a
$G[l_i]$-orbit of a nonzero vector $v\in
V[l_i]$ is closed, then this vector lies in the class
${\cal H}[l_i]$. The group  $G[l_i]$ is a one-dimensional torus $T[l_i]$ for $i=6$ and is locally isomorphic to ${\rm SL}_2$ for $i\neq
6$. Therefore, in the first case, the closedness of
$G[l_i]\cdot v$ is equivalent to the fact that zero lies in the convex hull of the weights of the
$T[l_6]$-module $V[l_6]$ that occur in the decomposition of $v$ into a sum of weight vectors. This fact and Fig.\,13 imply that
$e_\alpha + e_\beta$ is a representative of the class ${\cal H}[l_6]$. In the second case, one can use the fact that, if the weights of certain weight vectors possess the following properties:
 \begin{enumerate}[\hskip 5mm\rm(a)] \setlength{\itemsep}{-1.mm}
 \item all these weights are different;
  \item zero the an inner point of their convex hull; and
   \item the difference of any two of these weights is not a root,
    \end{enumerate}
then the orbit of the sum of these weight vectors is closed (see\,\cite[Prop.
1.2]{DK}, \cite[╥хюЁхьр 6.19]{PV}). This is applicable to the case of $i=3$. Figure 13 shows that the roots of the group $G[l_3]$ with respect to the torus $T[l_3]$ are $\pm\alpha$, and the weights of
$V[l_3]$ are the orthogonal projections of $\beta$,
$\alpha+\beta$, $2\alpha+\beta$, and $3\alpha
+\beta$ onto the straight line $\{l_3=0\}$. Hence, the
$G[l_3]$-orbit of the vector $e_{\beta} +
e_{3\alpha+\beta}$ is closed. Therefore,
$e_{\beta} + e_{3\alpha+\beta}$  is a representative of the conjugacy class~${\cal
H}[l_3]$.

In conclusion, note that, for
$\theta$-representations, our algorithm turns into a classification algorithm for the orbits of nullforms. This claim results from the following proposition.

\begin{prop} For every $\theta$-representation, the strata of the null-cone are orbits.
\end{prop}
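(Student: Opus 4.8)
The plan is to combine the general description of strata from Theorem~\ref{stratum} with Vinberg's classification of homogeneous nilpotent elements. Recall the $\theta$-set-up: $\theta$ is a finite-order automorphism of a semisimple Lie algebra $\mathfrak g$ producing a cyclic grading $\mathfrak g=\bigoplus_{j}\mathfrak g_j$, and the $\theta$-representation is the action of $G=G_0$ (the connected subgroup with Lie algebra $\mathfrak g_0$) on $V=\mathfrak g_1$. By Vinberg's theorem the null-cone ${\cal N}^{}_{G,V}$ coincides with the set of elements of $\mathfrak g_1$ that are nilpotent in $\mathfrak g$. By Theorem~\ref{stratum}(2), every stratum different from $\{0\}$ has the form ${\cal H}[l]=G\cdot V[l^+]^0$ for a stratifying $l\in\mathfrak t(\mathbb Q)$, and by Theorem~\ref{stratum}(3) two such strata coincide iff the defining elements lie on the same $W^{}_{G,T}$-orbit. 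Since a stratum is $G$-stable, it suffices to show that any two of its elements are $G$-conjugate.

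First I would attach to a nilpotent $e\in\mathfrak g_1$ its Kempf optimal one-parameter subgroup $\lambda_e$. Because the whole optimization takes place inside $G=G_0$ and the norm is $\operatorname{Ad}(G_0)$-invariant, $\lambda_e$ is a cocharacter of $T\subseteq G_0$, well defined up to $G_0$-conjugacy, and by the very definition of strata its $G_0$-conjugacy class is exactly the invariant distinguishing the strata, i.e.\ the $W^{}_{G,T}$-class of $l$. The key identification is then the graded analogue of the Hesselink--Kempf--Slodowy fact used for the adjoint representation in Example~5: by the graded Jacobson--Morozov theorem (Vinberg) the element $e$ lies in a \emph{normal} $\mathfrak{sl}_2$-triple $(e,h,f)$ with $h\in\mathfrak g_0$ and $f\in\mathfrak g_{-1}$, and $\lambda_e$ is a positive rational multiple of the cocharacter defined by the characteristic $h$ under the identification $\mathfrak t\cong\mathfrak t^*$ of Subsection~2.2. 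Consequently the stratum of $e$ is governed precisely by the $G_0$-conjugacy class of $h$.

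Finally I would invoke Vinberg's classification of homogeneous nilpotent elements \cite{V1},\cite{V2}: existence of a normal triple furnishes every nilpotent $e\in\mathfrak g_1$ with a characteristic $h\in\mathfrak g_0$, and its uniqueness part asserts that two nilpotent elements of $\mathfrak g_1$ are $G_0$-conjugate if and only if their characteristics are $G_0$-conjugate. Chaining the equivalences, two nilpotents $e,e'\in\mathfrak g_1$ lie in the same stratum iff $\lambda_e,\lambda_{e'}$ are $G_0$-conjugate iff their characteristics $h,h'$ are $G_0$-conjugate iff $e,e'$ are $G_0$-conjugate; hence each stratum is a single $G_0$-orbit, as claimed. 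The hard part will be the middle step: one must verify that the Kempf optimal cocharacter of a nilpotent $e\in\mathfrak g_1$ can be realized inside $G_0$ and coincides, up to scaling, with the characteristic of a normal $\mathfrak{sl}_2$-triple through $e$ --- a norm-minimality computation in the graded setting, for which the adjoint case $\theta=\mathrm{id}$ (recalled in Example~5, following \cite{H1},\cite{Sl},\cite{D}) is the model. Once this is in place, Vinberg's existence-and-uniqueness of normal triples closes the argument.
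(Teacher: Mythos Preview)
Your proposal is correct and follows essentially the same route as the paper: the paper's proof simply says that the homogeneous (graded) Morozov theorem of Vinberg \cite{V4} allows one to transplant Slodowy's argument from \cite[Prop.~1]{Sl} (itself inspired by \cite[\S1, Lemma~1.4]{K2}) to the $\theta$-setting, and the chain of equivalences you spell out---optimal one-parameter subgroup $\leftrightarrow$ characteristic of a normal $\mathfrak{sl}_2$-triple $\leftrightarrow$ $G_0$-orbit---is exactly what that transplanted argument amounts to. Your identification of the ``hard part'' (that the Kempf optimal cocharacter coincides, up to scaling, with the characteristic $h$) is precisely the step the paper delegates to the Slodowy reference.
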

\begin{proof} The existence of a homogeneous version of the Morozov theorem
\cite[Theorem\;1]{V4} allows one to apply the arguments similar to those by which this fact was proved in
\cite[Prop.\;1]{Sl} for the adjoint representation (in turn, these arguments were inspired by \cite[\S1, Lemma
1.4]{K2}).
\end{proof}
\end{example}

\footnotesize


\begin{thebibliography}{XXx}



\bibitem[Ad]{Ad}
    O.\;M.\;Adamovich, {\it Equidimensional representations of simple algebraic groups}, Transl., Ser. 2, Amer. Math. Soc. {\bf 128} (1986), 25--29.


\bibitem[An]{An}
 L.\;V.\;Antonyan,
{\it The classification of four-vectors of the eight-dimensional space},
Tr. Semin. Vektorn. Tenzorn. Anal. Prilozh. Geom. Mekh. Fiz. {\bf 20} (1981), 144--161 (Russian). Zbl 0467.15018.

\bibitem[AE]{AE}
L.\;V.\;Antonyan, A.\;G.\;Elashvili,
{\it Classification of spinors of dimension sixteen},
Tr. Tbilis. Mat. Inst. Razmadze {\bf 70} (1982), 5--23 (Russian). Zbl 0519.17006.


\bibitem[BC]{BC}
P.~Bala, R.~W.~Carter,
\newblock {\em
Classes of unipotent elements in simple
algebraic groups, {\rm I} and {\rm II}},
\newblock Math. Proc. Camb. Phil. Soc.
{\bf 79} (1976), 401--425 and {\bf 80}
(1976), 1--18.


\bibitem[Bo]{B}
F.\;A.\;Bogomolov, {\it Holomorphic tensors and vector bundles on projective varieties},
Math. USSR, Izv. {\bf 13} (1979), 499--555.








\bibitem[Dy]{D}  
E.\;B.\;Dynkin,
{\it  Semisimple subalgebras of se\-mi\-simp\-le Lie algebras},
Amer. Math. Soc. Transl.,
Ser. 2 {\bf 6}~(1957), 111--244.




\bibitem[DK]{DK}
J.~Dadok, V.~Kac,
\newblock{\em Polar groups},
\newblock J.~Algebra {\bf 92} (1985),
504--524.






\bibitem[GV]{GV}
V.~Gatti, E.~Viniberghi,
\newblock {\em Spinors of a {\rm 13}-dimensional space},
\newblock Adv. Math. {\bf 30} (1978),
137--155.

\bibitem[He${}_1$]{H1}
W.~H.~Hesselink,
\newblock {\em
Uniform instability in reductive groups},
\newblock J. reine angew. Math.
{\bf 303}/{\bf 304} (1978), 74--96.

\bibitem[He${}_2$]{H2}
W.~H.~Hesselink,
\newblock {\em
Desingularizations
of varieties of nullforms},
\newblock Invent. math. {\bf 55} (1979),
141--163.

\bibitem[Hi]{Hi}
D.~Hilbert,
\newblock {\em \"Uber die vollen Invariantensysteme},
\newblock Math. Ann. {\bf 42} (1893),
313--373.

\bibitem[Hu]{Hu}
J.~E.~Humphreys,
\newblock {\em Introduction to Lie Algebras
and Representation Theory},
\newblock Graduate Texts in Ma\-the\-ma\-tics,
Vol. 9, 1972, Springer-Verlag, New York,
Heidelberg.


\bibitem[Ka${}_1$]{K1}
V.\;G.\;Kac,
{\it On the question of describing the orbit space of linear algebraic groups},
Usp. Mat. Nauk {\bf 30} (1975), no. 6(186), 173--174 (Russian). Zbl 0391.20035.





\bibitem[Ka${}_2$]{K2}
V.~Kac,
\newblock{\em Some remarks on nilpotent
orbits},
\newblock J. Algebra {\bf 64} (1980),
190--213.

\bibitem[Ke]{Ke}
G.~R.~Kempf,
\newblock {\em Instability in invariant
theory},
\newblock Ann. of Math. {\bf 108} (1978),
299--316.

\bibitem[Ki]{Ki}
F.~C.~Kirwan,
\newblock{\em Cohomology of Quotients in
Symplectic and Algebraic Geometry},
\newblock Math. Notes {\bf 31}, Prin\-ceton
Univ. Press, Princeton, 1984.

\bibitem[LB]{LB}
L.~Le~Bruyn,
\newblock{\em Nilpotent representations},
\newblock J. Algebra {\bf 197} (1997),
153--177.

\bibitem[Li${}_1$]{L1}
P.~Littelmann,
\newblock {\em Koregul\"are und \"aquidimensionale
Darstellungen halbeinfacher Liegruppen},
\newblock J. Algebra {\bf 123} (1989), no.
1, 193--222.

\bibitem[Li${}_2$]{L2}
P.~Littelmann,
\newblock {\em An effective method to classify
nilpotent orbits},
\newblock in: Progress in Math., Vol. 143,
1996, Birkh\"auser Basel, 255--269.

\bibitem[Mu]{M}
D.~Mumford,
\newblock {\em Geometric Invariant Theory},
\newblock Ergebnisse der Mathematik und ihrer Grenzgebiete, Band 34, Berlin, Sprin\-ger-Verlag,
1965.

\bibitem[Ne]{N}
L.~Ness,
\newblock {\em A stratification of the
null-cone via moment map},
\newblock Amer. J. Math. {\bf 106} (1984),
1281--1329.





\bibitem[Po${}_1$]{P1}
V.\;L.\;Popov, {\it Representations with a free module of covariants},
 Funct. Anal. Appl. {\bf 10} (1976), no. 3, 242--244.



\bibitem[Po${}_2$]{P2}
V.\;L.\;Popov, {\it
Classification of spinors of dimension fourteen},
 Trans. Mosc. Math. Soc. {\bf 1} (1980), 181--232.



\bibitem[Po${}_3$]{P3}
V.~L.~Popov,
\newblock {\em Sections in invariant theory},
\newblock in: {\it Proceedings of the Sophus Lie Memorial
Con\-ference} Oslo, 1992, Scandinavian University Press
(1994), 315--362.


\bibitem[PV]{PV}
V.\;L.\;Popov, E.\;B.\;Vinberg, {\it
Invariant Theory}, in:\;{\it Algebraic
Geomet\-ry}, IV,
 Encyclopaedia of Mathematical Sci\-ences, Vol. 55,
 Springer-Verlag, Berlin, 1994, pp.\;123--284.



\bibitem[Ro]{R}
G.~Rousseau,
\newblock {\em Immeubles sph\'eriques et
theorie des invariants},
\newblock C. R. Acad. Sc. Paris {\bf 286}
(1978), A 247--250.



\bibitem[Sc${}_1$]{S1}  G.~W.~Schwarz,
\newblock {\em Representations of simple Lie groups with a
free module of covariants},
\newblock Invent. math. {\bf 50} (1978), 1--12.

\bibitem[Sc${}_2$]{S2} G.~W.~Schwarz,
\newblock {\em Lifting smooth homotopies
of orbit spaces}, \newblock Inst. Hautes Etudes Sci.
Publ. Math. {\bf 51} (1980), 37--135.

\bibitem[Sl]{Sl}
P.~Slodowy,
\newblock {\em Die Theorie der optimalen
Einparameteruntergruppen f\"ur
instabile Vektoren},
\newblock  in: {\em Al\-geb\-raic Transformation
Groups and Invariant Theory}, Edited by H.
Kraft, P. Slodowy, T. A. Springer, DMV
Seminar, Vol. 13, 1989, Birkh\"auser Basel,
pp.\;115--130.

\bibitem[Vi${}_1$]{V1}
E.\;B.\;Vinberg,
{\it On the linear groups associated to periodic automorphisms of semisimple algebraic groups},
Sov. Math., Dokl. {\bf 16} (1975), 406--409.





\bibitem[Vi${}_2$]{V2}
E.\;B.\;Vinberg,
{\it On the classification of the nilpotent elements of graded Lie algebras}, 
Sov. Math., Dokl. {\bf 16} (1976), 1517--1520.

\bibitem[Vi${}_3$]{V3}
E.\;B.\;Vinberg,
{\it The Weyl group of a graded Lie algebra},
Math. USSR, Izv. {\bf 10} (1977), 463--495.

\bibitem[Vi${}_4$]{V4}
E.\;B.\;Vinberg,
{\it Classification of homogeneous nilpotent elements of a semisimple graded Lie algebra},
Sel. Math. Sov. {\bf 6} (1987), 15--35.




\bibitem[VE]{VE}
E.\;B.\;Vinberg, A.\;G.\;Elashvili,
{\it A classification of three-vectors of a nine-dimensional space},
Sel. Math. Sov. {\bf 7} (1988), no. 1, 63--98.




\end{thebibliography}
\end{document}